\definecolor{bluecite}{HTML}{0875b7}
\newtheorem{theorem}{Theorem}
\newtheorem{remark}[theorem]{Remark}
\newtheorem{lemma}[theorem]{Lemma}
\newtheorem{proposition}[theorem]{Proposition}
\newtheorem{definition}[theorem]{Definition}
\DeclareMathOperator*{\divergenz}{div}              %
\DeclareMathOperator*{\dist}{dist}          %
\newcommand{\N}{\mathbb{N}}
\newcommand{\R}{\mathbb{R}}
\newcommand*\diff{\mathop{}\!\mathrm{d}}
\newcommand{\Lp}[1]{L^{#1}(\Omega)}
\newcommand{\Wp}[1]{W^{1,#1}(\Omega)}
\newcommand{\Wpzero}[1]{W^{1,#1}_0(\Omega)}
\newcommand{\eps}{\varepsilon}
\newcommand{\into}{\int_{\Omega}}
\newcommand{\weak}{\rightharpoonup}
\newcommand{\Linf}{L^{\infty}(\Omega)}
\renewcommand{\l}{\left}
\renewcommand{\r}{\right}
\newcommand{\WH}{W^{1, \mathcal{H}}_0(\Omega)}
\newcommand{\LH}{L^{ \mathcal{H}}(\Omega)}
\numberwithin{theorem}{section}
\numberwithin{equation}{section}
\title[On a class of critical  double phase problems]{On a class of critical  double phase problems}
\author[C.\,Farkas]{Csaba Farkas}
\address[C.\,Farkas]{Sapientia Hungarian University of Transylvania, Department of Mathematics and Computer Science, T\^{a}rgu Mure\textcommabelow{s}, Romania}
\email{farkascs@ms.sapientia.ro, farkas.csaba2008@gmail.com}
\author[A.\,Fiscella]{Alessio Fiscella}
\address[A.\,Fiscella]{Dipartimento di Matematica e Applicazioni, Universit\`a degli Studi di Milano-Bicocca, Via Cozzi 55, Milano, CAP 20125, Italy}
\email{alessio.fiscella@unimib.it}
\author[P.\,Winkert]{Patrick Winkert}
\address[P.\,Winkert]{Technische Universit\"{a}t Berlin, Institut f\"{u}r Mathematik, Stra\ss e des 17.\,Juni 136, 10623 Berlin, Germany}
\email{winkert@math.tu-berlin.de}
\subjclass{35J15, 35J62, 58B20, 58J60}
\keywords{double phase operator, critical type exponent, existence results}
\begin{document}

\begin{abstract}
	In this paper we study a class of double phase problems involving critical growth, namely
\begin{equation*}
	\begin{aligned}
		-\divergenz\big(|\nabla u|^{p-2} \nabla u+ \mu(x) |\nabla u|^{q-2} \nabla u\big)&=\lambda|u|^{\vartheta-2}u+|u|^{p^*-2}u && \text{in } \Omega, \\
		u&= 0&& \text{on } \partial\Omega,
	\end{aligned}
\end{equation*}
	where $\Omega \subset \R^N$ is a bounded Lipschitz domain, $1<\vartheta<p<q<N$, $q<p^*$ and $\mu(\cdot)$ is a nonnegative bounded weight function. The operator involved is the so-called double phase operator, which reduces to the $p$-Laplacian or the $(p,q)$-Laplacian when $\mu\equiv 0$ or $\inf \mu>0$, respectively. Based on variational and topological tools such as truncation arguments and genus theory, we show the existence of $\lambda^*>0$ such that the problem above has infinitely many weak solutions with negative energy values for any $\lambda\in (0,\lambda^*)$.
\end{abstract}

\maketitle

%*******************************************************************************
%*******************************************************************************
\section{Introduction}
%*******************************************************************************
%*******************************************************************************

In 1991, Garc\'{\i}a Azorero-Peral Alonso \cite{Garcia-Azorero-Peral-Alonso-1991} studied the critical $p$-Laplacian problem
\begin{equation}\label{problem2}
	\begin{aligned}
		-\divergenz\big(|\nabla u|^{p-2} \nabla u\big)&=|u|^{p^*-2}u+\lambda|u|^{q-2}u && \text{in } \Omega, \\
		u&= 0&& \text{on } \partial\Omega,
	\end{aligned}
\end{equation}
where $\lambda>0$, $1<p<N$, $1<q<p<p^*$ with $p^*$ being the critical Sobolev exponent given by
\begin{align}\label{critical-exponent}
	p^*=\frac{Np}{N-p}.
\end{align}
Using critical point theory and ideas of Benci-Fortunato \cite{Benci-Fortunato-1981} and of Garc\'{\i}a Azorero-Peral Alonso \cite{Garcia-Azorero-Peral-Alonso-1987}, problem \eqref{problem2} admits infinitely many solutions whenever $\lambda\in(0,\lambda_1)$ for $\lambda_1>0$ small enough, see Theorem 4.5 in \cite{Garcia-Azorero-Peral-Alonso-1991}. The main difficulty in their treatment is the lack of compactness in the embedding $\Wpzero{p} \hookrightarrow \Lp{p^*}$ and so the corresponding energy functional does not satisfy the Palais-Smale condition in general. Afterward, the techniques in \cite{Garcia-Azorero-Peral-Alonso-1991} have been transferred by several authors to different type of problems with critical growth. We refer to the works of Candito-Marano-Perera \cite{Candito-Marano-Perera-2015} for the $(p,q)$-Laplacian case with $(p,q)$-linear term, Corr\^{e}a-Costa \cite{Correa-Costa-2015} for Kirchhoff $p(x)$-Laplace problems,  Figueiredo-Santos J\'{u}nior-Su\'{a}rez \cite{Figueiredo-Santos-Junior-Suarez-2015} for anisotropic equations, Li-Zhang \cite{Li-Zhang-2009} and Yin-Yang \cite{Yin-Yang-2012} for $(p,q)$-Laplace equations with sublinear/superlinear nonlinearities and Zhang-Fiscella-Liang \cite{Zhang-Fiscella-Liang-2019} for fractional $p$-Laplacian Kirchhoff problems, see also the references therein.

Originally, the study of elliptic equations involving critical growth was initiated by the work of Br\'ezis-Nirenberg \cite{Brezis-Nirenberg-1983} who are concerned with the existence of positive solutions to the semilinear equation 
\begin{equation}\label{problem3}
	\begin{aligned}
		-\Delta u&=u^{p}+f(x,u)&& \text{in } \Omega, \\
		u&= 0&& \text{on } \partial\Omega,
	\end{aligned}
\end{equation}
where $p+1=2N/(N-2)$ is the critical Sobolev exponent of the embedding $H^1_0(\Omega)\hookrightarrow \Lp{p}$ and $f\colon\Omega\times\R\to\R$ is a lower-order perturbation of $u^p$ in the sense that $\displaystyle\lim_{s\to\infty} \frac{f(x,s)}{s^p}=0$. Problems of type \eqref{problem3} are motivated by variational problems in geometry and physics where the lack of compactness also occurs, for example, the Yamabe problem on Riemannian manifolds, see Yamabe \cite{Yamabe-1960} or the existence of nonminimal solutions for Yang-Mills functions, see Taubes \cite{Taubes-1982-1, Taubes-1982-2}. We refer to \cite{Brezis-Nirenberg-1983} for more examples.

In the current paper, we are interested of equations of type \eqref{problem2} which are driven by the so-called double phase operator which is given by
\begin{align}\label{double-phase-operator}
	\divergenz\big(|\nabla u|^{p-2} \nabla u+ \mu(x) |\nabla u|^{q-2} \nabla u\big)\quad \text{for }u\in \WH
\end{align}
with an appropriate Musielak-Orlicz Sobolev space $\WH$. It is easy to see that this operator is a generalization of the $p$-Laplacian and the $(p,q)$-Laplacian for $p<q$ by setting $\mu\equiv 0$ or $\inf \mu>0$, respectively. In 1986, Zhikov \cite{Zhikov-1986} studied the corresponding energy functional to \eqref{double-phase-operator} given by
\begin{align}\label{integral_minimizer}
	\omega \mapsto \int_\Omega \l(\frac{1}{p}|\nabla  \omega|^p+\frac{\mu(x)}{q}|\nabla  \omega|^q\r)\diff x
\end{align}
in order to provide models for strongly anisotropic materials, that is, the modulating coefficient $\mu(\cdot)$ dictates the geometry of composites made of two different materials with distinct power hardening exponents $p$ and $q$. Note that \eqref{integral_minimizer} is a prototype of a functional 
whose integrands change their ellipticity according to the points where $\mu(\cdot)$ vanishes or not. We refer to the papers of Baroni-Colombo-Mingione \cite{Baroni-Colombo-Mingione-2015,Baroni-Colombo-Mingione-2018}, Colombo-Mingione \cite{Colombo-Mingione-2015a,Colombo-Mingione-2015b} and  Marcellini \cite{Marcellini-1991,Marcellini-1989b} for deeper investigations of (local) minimizers of \eqref{integral_minimizer}, see also the paper of Mingione-R\u{a}dulescu \cite{Mingione-Radulescu-2021}  about recent developments for problems with nonstandard growth and nonuniform ellipticity.

Given a bounded domain $\Omega \subset \R^N$, $N\geq 2$ with Lipschitz boundary $\partial\Omega$, in this paper we consider the following quasilinear elliptic equation with critical growth
\begin{equation}\label{problem}
	\begin{aligned}
		-\divergenz\big(|\nabla u|^{p-2} \nabla u+ \mu(x) |\nabla u|^{q-2} \nabla u\big)&=\lambda|u|^{\vartheta-2}u+|u|^{p^*-2}u && \text{in } \Omega, \\
		u&= 0&& \text{on } \partial\Omega,
	\end{aligned}
\end{equation}
where $\lambda>0$ is a real parameter to be specified, $p^*$ is the critical exponent to $p$ given in \eqref{critical-exponent} and we suppose that
\begin{align}\label{assumptions}
	1<\vartheta<p<q<N, \quad q<p^*
	\quad\text{and}\quad
	0 \leq \mu(\cdot)\in \Linf.
\end{align}

The main result in this paper reads as follows.

\begin{theorem}\label{main_theorem}
	Let \eqref{assumptions} be satisfied. Then there exists $\lambda^*>0$ such that problem \eqref{problem} admits infinitely many weak solutions with negative energy values for any $\lambda \in (0,\lambda^*)$.
\end{theorem}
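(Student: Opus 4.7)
The plan is to realize Theorem \ref{main_theorem} by a variant of the Garc\'\i a Azorero--Peral Alonso/Benci--Fortunato scheme adapted to the Musielak--Orlicz setting of $\WH$. Set
\[
J_\lambda(u)=\into\left(\frac{1}{p}|\nabla u|^p+\frac{\mu(x)}{q}|\nabla u|^q\right)\diff x-\frac{\lambda}{\vartheta}\into|u|^\vartheta\diff x-\frac{1}{p^*}\into|u|^{p^*}\diff x,
\]
which is of class $C^1$ and even on $\WH$ (using the continuous embedding $\WH\hookrightarrow\Lp{p^*}$, guaranteed by $q<p^*$). Because of the critical term, $J_\lambda$ is not bounded below, so the first step is a truncation: using the modular-versus-norm inequalities for $\WH$ together with the Sobolev embedding, one obtains an estimate of the form $J_\lambda(u)\ge h_\lambda(\|u\|)$ on $\WH$, where $h_\lambda\colon[0,\infty)\to\R$ is built from the $p$-term (leading, since $\vartheta<p<q$), the concave $\vartheta$-term and the critical term. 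Since $\vartheta<p<p^*$, $h_\lambda$ has a local minimum at some $R_0>0$ and a local maximum at some $R_1>R_0$ with $h_\lambda(R_1)>0$, provided $\lambda$ is smaller than a threshold $\lambda_1>0$. Pick a smooth, even, nonincreasing cutoff $\tau\colon[0,\infty)\to[0,1]$ with $\tau\equiv 1$ on $[0,R_0]$ and $\tau\equiv 0$ on $[R_1,\infty)$, and define the truncated functional
\[
\widetilde J_\lambda(u)=\into\left(\frac{1}{p}|\nabla u|^p+\frac{\mu(x)}{q}|\nabla u|^q\right)\diff x-\frac{\lambda}{\vartheta}\into|u|^\vartheta\diff x-\frac{\tau(\|u\|)}{p^*}\into|u|^{p^*}\diff x.
\]
Then $\widetilde J_\lambda$ is even, $C^1$, coercive, bounded below, and agrees with $J_\lambda$ on $\{\|u\|\le R_0\}$; in particular any critical point of $\widetilde J_\lambda$ with $\widetilde J_\lambda<0$ is a critical point of $J_\lambda$.

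Next I would verify a \emph{local} Palais--Smale condition for $\widetilde J_\lambda$ at negative energy levels. Coercivity gives boundedness of PS sequences; the usual Brezis--Lieb/concentration-compactness argument, adapted to the $(p,q)$ setting exactly as in Fiscella--Pucci type works on double-phase critical problems, shows that concentration of mass at the critical exponent costs at least an energy of order $c^\star=c^\star(N,p,S)$, where $S$ is the best Sobolev constant for $\Wpzero{p}\hookrightarrow\Lp{p^*}$. Consequently $\widetilde J_\lambda$ satisfies $(\mathrm{PS})_c$ for every $c<0$, once one further shrinks $\lambda$ so that the truncation forces every PS sequence of $\widetilde J_\lambda$ at a negative level to live eventually in $\{\|u\|\le R_0\}$, where $\widetilde J_\lambda=J_\lambda$.

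The third step is the construction of infinitely many negative critical values via Krasnoselskii genus. For any $k\in\N$ choose a $k$-dimensional subspace $V_k\subset\WH$; since all norms on $V_k$ are equivalent and $\vartheta<p<q$, the concave term $\lambda\vartheta^{-1}\int_\Omega|u|^\vartheta$ dominates the leading homogeneous terms as $t\downarrow 0$, so there exists $t_k>0$ and $\rho_k>0$ with
\[
\widetilde J_\lambda(u)\le-\rho_k<0 \quad\text{for every } u\in V_k\cap\{\|u\|=t_k\}.
\]
Denoting $\Sigma=\{A\subset\WH\setminus\{0\}:\,A \text{ closed, symmetric}\}$ and $\gamma$ the Krasnoselskii genus, the sets $\Sigma_k=\{A\in\Sigma:\gamma(A)\ge k\}$ are nonempty for all $k$, and the min-max values
\[
c_k=\inf_{A\in\Sigma_k}\sup_{u\in A}\widetilde J_\lambda(u)
\]
satisfy $c_k\le-\rho_k<0$ and $c_k\to 0^{-}$. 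The standard deformation lemma together with the local PS condition and the evenness of $\widetilde J_\lambda$ gives that each $c_k$ is a critical value; multiplicity of critical points with the same $c_k$ follows from the genus whenever $c_k=c_{k+j}$. Setting $\lambda^*$ as the minimum of the thresholds appearing above yields the claim for all $\lambda\in(0,\lambda^*)$.

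The main obstacle is the second step: establishing the local Palais--Smale condition in the Musielak--Orlicz space $\WH$. Unlike the classical $p$-Laplacian situation, the nonhomogeneous $(p,q)$ structure forces one to handle simultaneously two scales in the gradient and to pass to the limit in the double-phase operator along concentrating sequences; the correct threshold $c^\star$ must be tuned so that the truncation radius $R_0(\lambda)$, the best constant $S$, and the genus levels $c_k$ are all mutually compatible, which is precisely what constrains the size of $\lambda^*$.
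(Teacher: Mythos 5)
Your overall scheme (truncate, establish a local Palais--Smale condition at negative levels, then run Krasnosel'skii genus on a min-max sequence $c_k$) is precisely what the paper does, and the first and third steps match the paper's Lemmas~\ref{lemma_truncated_functional}--\ref{lemma-critical-values} in their essentials. However, there is a genuine gap in your second step, and it is exactly the point the paper singles out as the main obstruction.

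You propose to obtain the local \textnormal{(PS)}$_c$ condition by ``the usual Br\'ezis--Lieb/concentration-compactness argument, adapted to the $(p,q)$ setting.'' In the Musielak--Orlicz framework this route is blocked: although $\WH\hookrightarrow W_0^{1,p}(\Omega)$ continuously, a concentration-compactness analysis of the double phase operator would have to handle the $\mu(x)|\nabla u|^{q-2}\nabla u$ contribution at the level $q^*$, and it is not known whether $u\in\WH$ gives $u\in L^{q^*}(\Omega)$ --- the optimality of Sobolev embeddings for $\WH$ is an open problem. Since $\mu$ may vanish, you also cannot reduce to the $(p,q)$-Laplacian situation where the underlying space is $W_0^{1,q}(\Omega)$. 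The paper therefore replaces concentration-compactness by a pointwise gradient-convergence lemma \`a la Boccardo--Murat (Lemma~\ref{Lgradients}): one tests $J_\lambda'(u_n)$ against the truncations $T_k(u_n-u)$, exploits the monotonicity inequality \eqref{LG5} and an $L^\theta$-interpolation on the level sets $\{|u_n-u|\le k\}$ and $\{|u_n-u|>k\}$ to force $\nabla u_n\to\nabla u$ a.e.\ in $\Omega$. Only with this a.e.\ convergence in hand can one invoke the Br\'ezis--Lieb identities for $\|\nabla u_n\|_p^p$, $\|\nabla u_n\|_{q,\mu}^q$, and $\|u_n\|_{p^*}^{p^*}$, and then show by energy comparison that $\ell=\lim\|u_n-u\|_{p^*}=0$, yielding \textnormal{(PS)}$_c$ for $c<0$ and $\lambda$ small. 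Without Lemma~\ref{Lgradients}, the passage to the limit in the double phase operator along a PS sequence is not justified, so your proof of the compactness step does not close.

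A minor additional point: when you bound $J_\lambda$ from below for small $\|u\|_{1,\mathcal{H},0}$, the modular--norm inequality of Proposition~\ref{proposition_modular_properties}(iii) gives a $\|u\|_{1,\mathcal{H},0}^q$ lower bound (not a $p$-power), so the auxiliary function you should compare with is $\beta_\lambda(t)=\tfrac{1}{q}t^q-\tfrac{\lambda}{\vartheta}C_\vartheta t^\vartheta-\tfrac{1}{p^*}C_{p^*}t^{p^*}$ on $[0,1)$; the $p$-power appears only for $\|u\|_{1,\mathcal{H},0}\ge 1$, where the cutoff kills the critical term anyway. This affects the precise location of $R_0(\lambda),R_1(\lambda)$ but not the qualitative picture.
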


The proof of Theorem \ref{main_theorem} relies on a careful combination of variational and topological tools, such as truncation techniques and genus theory introduced by Krasnosel'skii \cite{Krasnoselskii-1964}. Indeed, in the study of problem \eqref{problem} there are lots of difficulties to be overcome. First, the corresponding energy functional to problem \eqref{problem} does not contain the norm of the space $\WH$, so the truncation argument introduced by Garc\'{\i}a Azorero-Peral Alonso \cite{Garcia-Azorero-Peral-Alonso-1991} in order to have a local Palais-Smale condition needs a more careful treatment. Second, in contrast to the works with the $(p,q)$-Laplacian, see \cite{Li-Zhang-2009} and \cite{Yin-Yang-2012}, because of \eqref{assumptions}, we are not working in usual Sobolev spaces but in Musielak-Orlicz Sobolev spaces. In this direction, even if we know that $\WH\hookrightarrow W_0^{1,p}(\Omega)$ continuously, the classical Lions' concentration-compactness principle in $W_0^{1,p}(\Omega)$ cannot work for \eqref{problem}. Indeed, because of the $q$-term appearing in the double phase operator, we need to know if for $u\in\WH$ we can reach $u\in L^{q^*}(\Omega)$, with $q^*=Nq/(N-q)$ being the critical Sobolev exponent at level $q$. However, the optimality of Sobolev type embeddings for $\WH$ is still an open problem. In order to handle the critical Sobolev nonlinearity in \eqref{problem}, we exploit a suitable convergence analysis of gradients, inspired by Boccardo-Murat \cite{Boccardo-Murat-1992}.

To the best of our knowledge there are only three works dealing with a double phase operator along with critical growth. In Farkas-Winkert \cite{Farkas-Winkert-2021} the authors study the singular double phase problem
\begin{equation}\label{problem4}
	\begin{aligned}
		%-\Delta_{F,p}u-\mu(x)\Delta_{F,q} & = u^{p^{*}-1}+cu^{\gamma-1}+\lambda g(u)-u^{p-1}-\mu(x)u^{q-1} && \text{in } \Omega,\\
		-\divergenz (A(u)) & = u^{p^{*}-1}+\lambda \l(u^{\gamma-1}+ g(x,u)\r) \quad && \text{in } \Omega,\\
		u& = 0 &&\text{on } \partial \Omega,
	\end{aligned}
\end{equation}
with
\begin{align*}%\label{finsler_double_phase_operator}
	\divergenz (A(u)):=\divergenz\big(F^{p-1}(\nabla u)\nabla F(\nabla u) +\mu(x) F^{q-1}(\nabla u)\nabla F(\nabla u)\big)
\end{align*}
being the Finsler double phase operator and $(\R^N,F)$ stands for a Minkowski space; see also the corresponding nonhomogeneous Neumann case by the authors \cite{Farkas-Fiscella-Winkert-2021}. In \cite{Farkas-Winkert-2021} and \cite{Farkas-Fiscella-Winkert-2021}, the existence of at least one solution of \eqref{problem4} is shown by a local analysis on a suitable closed convex subset of $\WH$. In order to provide a multiplicity result for \eqref{problem}, in the current paper we need to work globally in the whole space $\WH$. Very recently, Crespo-Blanco-Papageorgiou-Winkert \cite{Crespo-Blanco-Papageorgiou-Winkert-2021} have been considered a nonhomogeneous singular Neumann double phase problem with critical growth on the boundary given by
\begin{equation*}%\label{problem5}
	\begin{aligned}
		-\divergenz\big(|\nabla u|^{p-2} \nabla u+ \mu(x) |\nabla u|^{q-2} \nabla u\big)+\alpha(x)u^{p-1}&= \zeta(x)u^{-\kappa}+\lambda u^{q_1-1} && \text{in } \Omega, \\
		\big(|\nabla u|^{p-2} \nabla u+ \mu(x) |\nabla u|^{q-2} \nabla u\big) \cdot \nu&= -\beta(x)u^{p_* -1} && \text{on } \partial\Omega.
	\end{aligned}
\end{equation*}
Based on the fibering method introduced by Dr\'{a}bek-Pohozaev \cite{Drabek-Pohozaev-1997} along with the Nehari manifold approach, the existence of at least two solutions is obtained.

Finally, we mention recent papers on the existence of solutions for double phase problems with homogeneous Dirichlet boundary condition treated by different methods in case of smooth or nonsmooth right-hand sides. We refer to Colasuonno-Squassina \cite{Colasuonno-Squassina-2016} for corresponding eigenvalue problems, Fiscella \cite{Fiscella-2020} involving Hardy potentials, Fiscella-Pinamonti \cite{Fiscella-Pinamonti-2020} for Kirchhoff type problems, Gasi\'nski-Papa\-georgiou \cite{Gasinski-Papageorgiou-2019} for locally Lipschitz continuous right-hand sides, Gasi\'nski-Winkert \cite{Gasinski-Winkert-2020a,Gasinski-Winkert-2020b,Gasinski-Winkert-2021} for convection and superlinear problems, Liu-Dai \cite{Liu-Dai-2018} for superlinear problems, Perera-Squassina \cite{Perera-Squassina-2019} for a Morse theoretical treatment, Zeng-Bai-Gasi\'nski-Winkert \cite{Zeng-Bai-Gasinski-Winkert-2020, Zeng-Gasinski-Winkert-Bai-2020} for multivalued obstacle problems and the references therein. 

%*******************************************************************************
%*******************************************************************************
\section{Preliminaries}
%*******************************************************************************
%*******************************************************************************

In this section we will recall the main properties of Musielak-Orlicz spaces $\Lp{\mathcal{H}}$, $\WH$ and the topological tools which are needed in our treatment. First, we denote by $\Lp{r}$ and $L^r(\Omega;\R^N)$ the usual Lebesgue spaces with the norm $\|\cdot\|_r$  and  by $\Wp{r}$ the corresponding  Sobolev spaces with norm $\|\cdot\|_{1,r}$ for $1\leq r \leq \infty$.

Let  $\mathcal{H}\colon \Omega \times [0,\infty)\to [0,\infty)$ be the nonlinear map defined by
\begin{align*}
	\mathcal H(x,t):= t^p+\mu(x)t^q,
\end{align*}
where we suppose \eqref{assumptions} and let $M(\Omega)$ be the space of all measurable functions $u\colon\Omega\to\R$. The Musielak-Orlicz Lebesgue space $L^\mathcal{H}(\Omega)$ is given by
\begin{align*}
	L^\mathcal{H}(\Omega)=\left \{u\in M(\Omega)\,:\,\varrho_{\mathcal{H}}(u)<\infty \right \}
\end{align*}
equipped with the Luxemburg norm
\begin{align*}
	\|u\|_{\mathcal{H}} = \inf \left \{ \tau >0\,:\, \varrho_{\mathcal{H}}\left(\frac{u}{\tau}\right) \leq 1  \right \},
\end{align*}
where the modular function is given by
\begin{align}\label{modular}
	\varrho_{\mathcal{H}}(u):=\into \mathcal{H}(x,|u|)\diff x=\into \big(|u|^{p}+\mu(x)|u|^q\big)\diff x.
\end{align}

The norm $\|\cdot\|_{\mathcal{H}}$ and the modular function $\varrho_{\mathcal{H}}$ are related as follows, see Liu-Dai \cite[Proposition 2.1]{Liu-Dai-2018} or Crespo-Blanco-Gasi\'nski-Harjulehto-Winkert \cite[Proposition  2.14]{Crespo-Blanco-Gasinski-Harjulehto-Winkert-2021}.

\begin{proposition}\label{proposition_modular_properties}
	Let \eqref{assumptions} be satisfied, $y\in \Lp{\mathcal{H}}$, $\zeta>0$ and $\varrho_{\mathcal{H}}$ be defined by \eqref{modular}. Then the following hold:
	\begin{enumerate}
		\item[\textnormal{(i)}]
		If $y\neq 0$, then $\|y\|_{\mathcal{H}}=\zeta$ if and only if $ \varrho_{\mathcal{H}}(\frac{y}{\zeta})=1$;
		\item[\textnormal{(ii)}]
		$\|y\|_{\mathcal{H}}<1$ (resp.\,$>1$, $=1$) if and only if $ \varrho_{\mathcal{H}}(y)<1$ (resp.\,$>1$, $=1$);
		\item[\textnormal{(iii)}]
		If $\|y\|_{\mathcal{H}}<1$, then $\|y\|_{\mathcal{H}}^q\leq \varrho_{\mathcal{H}}(y)\leq\|y\|_{\mathcal{H}}^p$;
		\item[\textnormal{(iv)}]
		If $\|y\|_{\mathcal{H}}>1$, then $\|y\|_{\mathcal{H}}^p\leq \varrho_{\mathcal{H}}(y)\leq\|y\|_{\mathcal{H}}^q$;
		\item[\textnormal{(v)}]
		$\|y\|_{\mathcal{H}}\to 0$ if and only if $ \varrho_{\mathcal{H}}(y)\to 0$;
		\item[\textnormal{(vi)}]
		$\|y\|_{\mathcal{H}}\to \infty$ if and only if $ \varrho_{\mathcal{H}}(y)\to \infty$.
	\end{enumerate}
\end{proposition}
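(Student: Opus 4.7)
The plan is to build the six items in the natural order (i) $\Rightarrow$ (iii), (iv) $\Rightarrow$ (ii), (v), (vi), with the scaling behavior of the integrand $\mathcal H(x,t)=t^p+\mu(x)t^q$ doing essentially all the work. The key elementary observation is that for $\lambda>0$ and almost every $x\in\Omega$,
\begin{equation*}
\mathcal H(x,\lambda t)=\lambda^p t^p+\mu(x)\lambda^q t^q,
\end{equation*}
so if $\lambda\ge1$ then $\lambda^p\mathcal H(x,t)\le\mathcal H(x,\lambda t)\le\lambda^q\mathcal H(x,t)$, while if $0<\lambda\le1$ the reverse inequalities hold; in particular $\lambda\mapsto\mathcal H(x,\lambda t)$ is strictly increasing in $\lambda$ for $t>0$, continuous, vanishes at $\lambda=0$ and blows up as $\lambda\to\infty$. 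Integrating gives the modular analogues
\begin{equation*}
\lambda^p\varrho_{\mathcal H}(y)\le\varrho_{\mathcal H}(\lambda y)\le\lambda^q\varrho_{\mathcal H}(y)\quad(\lambda\ge1),\qquad \lambda^q\varrho_{\mathcal H}(y)\le\varrho_{\mathcal H}(\lambda y)\le\lambda^p\varrho_{\mathcal H}(y)\quad(0<\lambda\le1),
\end{equation*}
valid for every $y\in L^{\mathcal H}(\Omega)$.

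For (i), fix $y\neq0$ and consider $\Phi(\tau):=\varrho_{\mathcal H}(y/\tau)$ for $\tau>0$. By monotone and dominated convergence applied to the monotone integrand above, $\Phi$ is continuous and strictly decreasing on $(0,\infty)$; moreover $\Phi(\tau)\le\tau^{-p}\varrho_{\mathcal H}(y)\to0$ as $\tau\to\infty$, and $\Phi(\tau)\ge\tau^{-p}\|y\|_p^p\to\infty$ as $\tau\to0^+$ (since $y\neq0$ implies $\|y\|_p>0$). Consequently the set $\{\tau>0:\Phi(\tau)\le1\}$ is a half-line $[\tau_0,\infty)$ and $\Phi(\tau_0)=1$. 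By definition of the Luxemburg norm $\|y\|_{\mathcal H}=\tau_0$, which is precisely (i). Taking $\zeta=\|y\|_{\mathcal H}=1$ also shows the $=1$ case of (ii), and the continuity/strict monotonicity of $\Phi$ already gives the two strict inequalities of (ii).

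Items (iii) and (iv) now follow by inserting $\zeta=\|y\|_{\mathcal H}$ into (i) and applying the scaling inequalities to $y=\zeta\cdot(y/\zeta)$. Indeed, if $\|y\|_{\mathcal H}<1$ we use $0<\zeta<1$ to get
\begin{equation*}
\|y\|_{\mathcal H}^{q}=\zeta^{q}\varrho_{\mathcal H}(y/\zeta)\le\varrho_{\mathcal H}(y)\le\zeta^{p}\varrho_{\mathcal H}(y/\zeta)=\|y\|_{\mathcal H}^{p},
\end{equation*}
which is (iii); the case $\|y\|_{\mathcal H}>1$ is symmetric and yields (iv). Finally, (v) and (vi) are immediate from (iii) and (iv): once $\|y_n\|_{\mathcal H}<1$ (resp.\ $>1$) eventually, the sandwich $\|y_n\|_{\mathcal H}^{\max(p,q)}\le\varrho_{\mathcal H}(y_n)\le\|y_n\|_{\mathcal H}^{\min(p,q)}$ (with the appropriate choice) forces the equivalence of the two convergences.

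I do not expect any genuine obstacle here: the only subtlety is checking continuity and the correct limits $0$ and $\infty$ of $\Phi$ at the endpoints, which requires $y\in L^{\mathcal H}(\Omega)$ (so $\varrho_{\mathcal H}(y)<\infty$) at one end and $\|y\|_p>0$ at the other. Everything else is just the two-sided scaling inequality for $\mathcal H$ together with the observation that the Luxemburg infimum is attained because of the continuity of $\Phi$.
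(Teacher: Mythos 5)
Your proof is correct. Note, however, that the paper does not prove this proposition at all: it is quoted from the literature (Liu--Dai, Proposition 2.1, and Crespo-Blanco--Gasi\'nski--Harjulehto--Winkert, Proposition 2.14), and your argument is essentially the standard one given there, namely the two-sided scaling estimate $\min\{\lambda^p,\lambda^q\}\varrho_{\mathcal{H}}(y)\leq\varrho_{\mathcal{H}}(\lambda y)\leq\max\{\lambda^p,\lambda^q\}\varrho_{\mathcal{H}}(y)$ combined with the unit-ball property of the Luxemburg norm, i.e.\ the fact that $\tau\mapsto\varrho_{\mathcal{H}}(y/\tau)$ is continuous, strictly decreasing (on the positive-measure set $\{y\neq0\}$), tends to $\infty$ as $\tau\to0^+$ and to $0$ as $\tau\to\infty$, so the infimum in the norm is attained with value $1$. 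Two cosmetic points: in (ii)--(iii) the case $y=0$ should be dismissed separately (both sides vanish), and in your final sandwich for (v)--(vi) the exponents swap roles according to whether $\|y_n\|_{\mathcal{H}}<1$ or $>1$, so the phrase ``with the appropriate choice'' should be spelled out as $\|y_n\|_{\mathcal{H}}^q\leq\varrho_{\mathcal{H}}(y_n)\leq\|y_n\|_{\mathcal{H}}^p$ in the first case and $\|y_n\|_{\mathcal{H}}^p\leq\varrho_{\mathcal{H}}(y_n)\leq\|y_n\|_{\mathcal{H}}^q$ in the second; neither point affects the validity of the argument.
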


Furthermore, we define the weighted space
\begin{align*}
	L^q_\mu(\Omega)=\left \{u\in M(\Omega)\,:\,\into \mu(x) |u|^q \diff x< \infty \right \}
\end{align*}
endowed with the seminorm
\begin{align*}
	\|u\|_{q,\mu} = \left(\into \mu(x) |u|^q \diff x \right)^{\frac{1}{q}}.
\end{align*}
While, the corresponding Musielak-Orlicz Sobolev space $W^{1,\mathcal{H}}(\Omega)$ is set as
\begin{align*}
	W^{1,\mathcal{H}}(\Omega)= \Big \{u \in L^\mathcal{H}(\Omega) \,:\, |\nabla u| \in L^{\mathcal{H}}(\Omega) \Big\}
\end{align*}
equipped with the norm
\begin{align*}
	\|u\|_{1,\mathcal{H}}= \|\nabla u \|_{\mathcal{H}}+\|u\|_{\mathcal{H}},
\end{align*}
where $\|\nabla u\|_\mathcal{H}=\|\,|\nabla u|\,\|_{\mathcal{H}}$. We denote by $W^{1,\mathcal{H}}_0(\Omega)$ the completion of $C^\infty_0(\Omega)$ in $W^{1,\mathcal{H}}(\Omega)$. By using \eqref{assumptions}, we know that we can endow the space $\WH$ with the equivalent norm given by
\begin{align*}
	\|u\|_{1,\mathcal{H},0}=\|\nabla u\|_{\mathcal{H}},
\end{align*}
see Proposition  2.18(ii) of Crespo-Blanco-Gasi\'nski-Harjulehto-Winkert \cite{Crespo-Blanco-Gasinski-Harjulehto-Winkert-2021}. Also, we have that the spaces $L^\mathcal{H}(\Omega)$, $W^{1,\mathcal{H}}(\Omega)$ and $\WH$ are uniformly convex and so reflexive Banach spaces, see  Colasuonno-Squassina \cite[Proposition 2.14]{Colasuonno-Squassina-2016} or Harjulehto-H\"{a}st\"{o} \cite[Theorem 6.1.4]{Harjulehto-Hasto-2019}.

Now, we recall the following embeddings for the spaces $\LH$ and  $\WH$, see Colasuonno-Squassina \cite[Proposition 2.15]{Colasuonno-Squassina-2016} or Crespo-Blanco-Gasi\'nski-Harjulehto-Winkert \cite[Proposition  2.16]{Crespo-Blanco-Gasinski-Harjulehto-Winkert-2021}.

\begin{proposition}\label{proposition_embeddings}
	Let \eqref{assumptions} be satisfied and let $p^*$ be the critical exponent to $p$. Then the following embeddings hold:
	\begin{enumerate}
		\item[\textnormal{(i)}]
		$\Lp{\mathcal{H}} \hookrightarrow \Lp{r}$ and $\WH\hookrightarrow \Wpzero{r}$ are continuous for any $r\in [1,p]$;
		\item[\textnormal{(ii)}]
		$\WH \hookrightarrow \Lp{r}$ is continuous for any $r \in [1,p^*]$ and compact for any $r \in [1,p^*)$;
		\item[\textnormal{(iii)}]
		$\Lp{\mathcal{H}} \hookrightarrow L^q_\mu(\Omega)$ is continuous;
		\item[\textnormal{(iv)}]
		$\Lp{q}\hookrightarrow\Lp{\mathcal{H}} $ is continuous.
	\end{enumerate}
\end{proposition}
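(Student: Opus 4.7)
My plan is to deduce all four embeddings from two elementary ingredients: pointwise comparisons between the integrand $\mathcal{H}(x,t)=t^p+\mu(x)t^q$ and the single terms $t^p$, $\mu(x)t^q$, $t^q$, combined with Proposition \ref{proposition_modular_properties} to pass from modular estimates to norm estimates, and then the classical Sobolev/Rellich--Kondrachov theory on the bounded Lipschitz domain $\Omega$.

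I would first dispose of parts (iii) and (iv), since they are essentially immediate. For (iii), the pointwise inequality $\mu(x)|u(x)|^q\leq \mathcal{H}(x,|u(x)|)$ gives $\|u\|_{q,\mu}^q\leq \varrho_{\mathcal{H}}(u)$; using Proposition \ref{proposition_modular_properties}(iii)--(iv) to bound $\varrho_{\mathcal{H}}(u)$ by $\max(\|u\|_{\mathcal{H}}^p,\|u\|_{\mathcal{H}}^q)$ yields continuity. For (iv), given $u\in \Lp{q}$, the boundedness of $\mu$ and $\Omega$ yields
\begin{equation*}
\varrho_{\mathcal{H}}(u)=\into |u|^p\diff x+\into \mu(x)|u|^q\diff x\leq C|\Omega|^{1-p/q}\|u\|_q^p+\|\mu\|_\infty\|u\|_q^q,
\end{equation*}
which is finite, and a scaling argument combined with Proposition \ref{proposition_modular_properties}(v) gives continuity of the embedding.

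Next I would tackle (i). From $|u|^p\leq \mathcal{H}(x,|u|)$ I get $\|u\|_p^p\leq \varrho_{\mathcal{H}}(u)$; again invoking Proposition \ref{proposition_modular_properties}(iii)--(iv), I obtain $\|u\|_p\leq C(1+\|u\|_{\mathcal{H}})$ and in fact a homogeneous estimate $\|u\|_p\leq C\|u\|_{\mathcal{H}}$ after the standard normalization $u/\|u\|_{\mathcal{H}}$. The chain $\Lp{\mathcal{H}}\hookrightarrow \Lp{p}\hookrightarrow \Lp{r}$ for $r\in[1,p]$ then follows from H\"{o}lder's inequality on the bounded domain $\Omega$. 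Applying this componentwise to $\nabla u$ and to $u$, and invoking that $\|\cdot\|_{1,\mathcal{H},0}=\|\nabla\cdot\|_{\mathcal{H}}$ is an equivalent norm on $\WH$ by Crespo-Blanco--Gasi\'{n}ski--Harjulehto--Winkert, I get $\WH\hookrightarrow \Wpzero{r}$.

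Finally, for (ii) I would chain part (i) with $r=p$ together with the classical Sobolev embedding $\Wpzero{p}\hookrightarrow \Lp{p^*}$ (continuous) and the Rellich--Kondrachov theorem $\Wpzero{p}\hookrightarrow \Lp{r}$ (compact for $r\in[1,p^*)$). Since the composition of a continuous embedding with a continuous (resp.\ compact) embedding is continuous (resp.\ compact), the conclusion follows. The main subtlety I anticipate is simply the critical case $r=p^*$, where compactness must not be claimed; here I would be careful to stop at continuity and note that this reflects the well-known failure of compactness in $\Wpzero{p}\hookrightarrow \Lp{p^*}$, which is precisely the obstruction motivating the concentration-compactness machinery invoked later in the paper.
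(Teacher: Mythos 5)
Your argument is correct. Note, however, that the paper does not prove this proposition at all: it simply cites Colasuonno--Squassina [Proposition 2.15] and Crespo-Blanco--Gasi\'nski--Harjulehto--Winkert [Proposition 2.16], so your contribution is a self-contained reconstruction of what those references establish, and it follows the standard lines. The key mechanism you use is the right one: the pointwise domination $|t|^p\leq \mathcal H(x,t)$ and $\mu(x)|t|^q\leq \mathcal H(x,t)$ combined with the unit-ball normalization $\varrho_{\mathcal H}(u/\|u\|_{\mathcal H})\leq 1$ (rather than the inhomogeneous bounds of Proposition \ref{proposition_modular_properties}(iii)--(iv) alone, which by themselves only give boundedness on the unit ball --- still enough for a linear inclusion, but the normalization gives the clean estimates $\|u\|_p\leq\|u\|_{\mathcal H}$ and $\|u\|_{q,\mu}\leq\|u\|_{\mathcal H}$); for (iv) your modular estimate plus the Luxemburg-norm definition with $\tau=C\|u\|_q$ for $C$ large yields the linear bound directly. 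Two small points worth making explicit if you write this up: in (i), the conclusion $\WH\hookrightarrow\Wpzero{r}$ (and not merely into $\Wp{r}$) uses that $\WH$ is the completion of $C_0^\infty(\Omega)$ and that $\Wpzero{r}$ is closed in $\Wp{r}$, so limits of smooth compactly supported Cauchy sequences stay in $\Wpzero{r}$; and in (iii), $\|\cdot\|_{q,\mu}$ is only a seminorm, which does not affect continuity of the inclusion. With these remarks your chaining through $\Wpzero{p}$, the classical Sobolev embedding at $r=p^*$, and Rellich--Kondrachov for $r<p^*$ gives exactly (ii), including the correct restraint of claiming only continuity at the critical exponent.
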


\begin{remark}
	Note that Proposition \ref{proposition_embeddings}\textnormal{(ii)} holds for $r=q<p^*$ by \eqref{assumptions}. Thus, $\WH \hookrightarrow \Lp{q}$ is compact.
\end{remark}
\begin{remark}\label{remark-constants}
Throughout the paper, for any $r\in[1,p^*]$ we denote with $C_r>0$ the constant given by Proposition \ref{proposition_embeddings}\textnormal{(ii)}, such that
\begin{align*}
	\|u\|_r^r\leq C_r\|u\|_{1,\mathcal{H},0}^r
\end{align*}
for any $u\in \WH$.
\end{remark}

In order to prove Theorem \ref{main_theorem} we are going to use some topological results introduced by Krasnosel'skii \cite{Krasnoselskii-1964}. To this end, let $X$ be a Banach space and let $\Sigma$ be the class of all closed subsets $A\subset X\setminus\left\{0\right\}$ that are symmetric with respect to the origin, that is, $u\in A$ implies $-u\in A$.

\begin{definition}
	Let $A\in\Sigma$. The Krasnosel'skii's genus $\gamma(A)$ of $A$ is defined as being the least positive integer $n$ such that there is an odd mapping $\phi\in C(A,\mathbb R^n)$ such that $\phi(x)\neq0$ for any $x\in A$. If $n$ does not exist, we set $\gamma(A)=\infty$. Furthermore, we set $\gamma(\emptyset)=0$.
\end{definition}

The following proposition states the main properties on
Krasnosel'skii's genus which we need later, see Rabinowitz \cite{Rabinowitz-1986}.

\begin{proposition}\label{proposition-genus} 
	Let $A,B\in\Sigma$. Then the following hold:
	\begin{enumerate}
		\item[\textnormal{(i)}] 
			If there exists an odd continuous mapping from $A$ to $B$, then $\gamma(A) \leq \gamma(B)$;
		\item[\textnormal{(ii)}]
			If there is an odd homeomorphism from $A$ to $B$, then $\gamma(A) = \gamma(B)$;
		\item[\textnormal{(iii)}]
			If $\gamma(B) < \infty$, then $\gamma\overline{(A\setminus B)} \geq \gamma(A) - \gamma(B)$;
		\item[\textnormal{(iv)}]
			The $n$-dimensional sphere $S^n$ has a genus of $n+1$ by the Borsuk-Ulam Theorem;
		\item[\textnormal{(v)}]
			If $A$ is compact, then $\gamma (A) < \infty$ and there exists $ \delta> 0$ such that $N_{\delta} (A)\subset\Sigma $ and $\gamma (N_{\delta} (A)) =\gamma (A)$, with $N_\delta(A)=\left\{x\in X:\,\, \dist(x,A)\leq\delta\right\}$.
	\end{enumerate}
\end{proposition}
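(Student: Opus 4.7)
The plan is to verify each of the five properties directly from the definition of the genus, with the one recurring tool being the \emph{odd Tietze extension}: every odd continuous map from a symmetric closed subset of the metric space $X$ into $\mathbb{R}^n$ extends to an odd continuous map defined on all of $X$. One produces it by first applying the classical Tietze extension theorem to obtain a continuous extension $F$ of the given map, and then replacing $F$ by $x \mapsto \tfrac{1}{2}\bigl(F(x) - F(-x)\bigr)$, which is continuous, odd, and agrees with the original map on the symmetric closed subset where the map was already odd.

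Items (i) and (ii) are almost immediate. For (i), if $f \colon A \to B$ is odd continuous and $\phi \colon B \to \mathbb{R}^n \setminus \{0\}$ realizes $\gamma(B) = n < \infty$, then $\phi \circ f$ realizes $\gamma(A) \leq n$; the case $\gamma(B) = \infty$ is trivial. Item (ii) follows by applying (i) to $f$ and $f^{-1}$.

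Item (iii) is the substantive one. Assume both $k = \gamma(B)$ and $m = \gamma(\overline{A \setminus B})$ are finite, since otherwise the inequality is vacuous; note $\overline{A \setminus B}$ is symmetric and closed, hence belongs to $\Sigma$. Pick odd continuous witnesses $\psi \colon B \to \mathbb{R}^k \setminus \{0\}$ and $\phi \colon \overline{A \setminus B} \to \mathbb{R}^m \setminus \{0\}$, extend both to odd continuous maps $\tilde\psi, \tilde\phi \colon X \to \mathbb{R}^k, \mathbb{R}^m$ via the odd extension lemma, and concatenate: $F \colon A \to \mathbb{R}^{m+k}$, $F(x) = \bigl(\tilde\phi(x), \tilde\psi(x)\bigr)$. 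This map is odd continuous; on $A \cap B$ its second block equals $\psi \neq 0$, and on $A \setminus B \subset \overline{A \setminus B}$ its first block equals $\phi \neq 0$. Hence $F$ avoids $0$ on $A$, witnessing $\gamma(A) \leq m + k$, which rearranges to the claim.

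For (iv), Borsuk-Ulam rules out odd continuous maps $S^n \to \mathbb{R}^n \setminus \{0\}$, giving $\gamma(S^n) \geq n+1$, while the inclusion $S^n \hookrightarrow \mathbb{R}^{n+1} \setminus \{0\}$ furnishes the opposite bound. For (v), compactness of $A$ together with $0 \notin A$ permits a finite cover of $A$ by open sets $U_1, \dots, U_n$ whose closures are disjoint from their antipodes $-U_i$, and a partition-of-unity argument then produces an odd continuous map from $A$ into $\mathbb{R}^n \setminus \{0\}$, so $\gamma(A) < \infty$. Given an odd witness $\phi$ with $n = \gamma(A)$ and its odd extension $\tilde\phi$ to $X$, the preimage $\tilde\phi^{-1}(\mathbb{R}^n \setminus \{0\})$ is open, symmetric, and contains the compact set $A$, hence contains some $N_\delta(A)$; shrinking $\delta$ keeps $0$ out and preserves symmetry. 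Restriction of $\tilde\phi$ yields $\gamma(N_\delta(A)) \leq n$, while the reverse inequality follows from (i) applied to the inclusion $A \hookrightarrow N_\delta(A)$. The main delicate point throughout is the odd Tietze extension lemma, which must be invoked carefully to preserve both oddness and the nonvanishing property, and this single construction powers both (iii) and (v).
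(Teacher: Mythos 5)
The paper offers no proof of this proposition, simply citing Rabinowitz's book, and your argument is precisely the classical one found there: composition for (i)--(ii), the odd Tietze extension plus concatenation (i.e.\ subadditivity) for (iii), Borsuk--Ulam with the inclusion $S^n\hookrightarrow\mathbb{R}^{n+1}\setminus\{0\}$ for (iv), and compactness together with the odd extension of a witness map for (v). Your proof is correct and matches the standard approach of the cited source, so nothing further is needed.
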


%*******************************************************************************
%*******************************************************************************
\section{Proof of the main result}
%*******************************************************************************
%*******************************************************************************

In this section we are going to prove Theorem \ref{main_theorem}. First, we note that the energy functional $J_\lambda\colon \WH\to\R$ related to problem \eqref{problem} is given by
\begin{align*}
	J_\lambda(u):=\frac{1}{p}\|\nabla u\|_p^p+\frac{1}{q}\|\nabla u\|_{q,\mu}^q-\frac{\lambda}{\vartheta}\|u\|_{\vartheta}^{\vartheta}-\frac{1}{p^{*}}\|u\|_{p^*}^{p^{*}}.
\end{align*}
It is clear that $J_\lambda \in C^1(\WH)$ and that the weak solutions of \eqref{problem} are exactly the critical points of $J_\lambda\colon \WH\to\R$.

Now, we discuss the compactness property for the functional $J_\lambda$, given by the Palais–Smale condition. We say that $\{u_n\}_{n\in\mathbb N}\subset\WH$ is a Palais-Smale sequence for $J_\lambda$ at level $c\in\mathbb R$ if
\begin{align}\label{palais-smale}
	J_\lambda(u_n)\to c \quad \text{and}\quad J'_\lambda(u_n)\to 0 \quad\text{in }\left(\WH\right)^*\quad\text{as }n\to\infty.
\end{align}
We say that $J_\lambda$ satisfies the Palais-Smale condition at level $c$ (\textnormal{(PS)$_c$} for short) if any Palais-Smale sequence $\{u_n\}_{n\in\N}$ at level $c$ admits a convergent subsequence in $\WH$.

\begin{lemma}\label{Lgradients}
	Let \eqref{assumptions} be satisfied and let $\{u_n\}_{n\in\N}\subset \WH$ be a bounded \textnormal{(PS)}$_c$ sequence with $c\in\R$. Then, up to a subsequence, $\nabla u_n(x)\to\nabla u(x)$ a.\,e.\,in $\Omega$ as $n\to\infty$.
\end{lemma}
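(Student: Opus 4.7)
The plan is to follow the truncation technique of Boccardo--Murat, adapted to the critical double phase setting. First, I would extract a subsequence (still labeled $\{u_n\}_{n\in\N}$) such that $u_n\weak u$ in $\WH$, $u_n\to u$ strongly in $\Lp{r}$ for every $r\in[1,p^*)$, $u_n(x)\to u(x)$ a.e.\,on $\Omega$, and $u_n\weak u$ in $\Lp{p^*}$; all these follow from boundedness, reflexivity, and Proposition~\ref{proposition_embeddings}. As a consequence, $\{|u_n|^{\vartheta-2}u_n\}_{n\in\N}$ and $\{|u_n|^{p^*-2}u_n\}_{n\in\N}$ are bounded in $\Lp{\vartheta'}$ and $\Lp{(p^*)'}$, respectively.

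For each fixed $k>0$ I would set $T_k(s):=\max\{-k,\min\{s,k\}\}$ and test \eqref{palais-smale} against $T_k(u_n-u)\in\WH$, which is admissible since $T_k$ is Lipschitz with $T_k(0)=0$. Writing $\mathcal{A}(x,\xi):=|\xi|^{p-2}\xi+\mu(x)|\xi|^{q-2}\xi$, this yields
\[
\into \mathcal{A}(x,\nabla u_n)\cdot\nabla T_k(u_n-u)\diff x = \lambda\into |u_n|^{\vartheta-2}u_n\, T_k(u_n-u)\diff x + \into |u_n|^{p^*-2}u_n\, T_k(u_n-u)\diff x + o(1).
\]
Since $|T_k(u_n-u)|\le k$ pointwise and $T_k(u_n-u)\to 0$ a.e., dominated convergence gives $T_k(u_n-u)\to 0$ strongly in every $\Lp{s}$, $s\in[1,\infty)$. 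H\"older's inequality, combined with the boundedness above, then forces both integrals on the right-hand side to vanish as $n\to\infty$. Controlling the critical term here is the main obstacle, and what makes it work is precisely the $L^\infty$-bound on $T_k(u_n-u)$, which upgrades the merely a.e.\,convergence available at the critical exponent into strong convergence in $\Lp{p^*}$.

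Using $\nabla T_k(u_n-u)=\nabla(u_n-u)\chi_{\{|u_n-u|\le k\}}$ and adding/subtracting $\mathcal{A}(x,\nabla u)\cdot\nabla(u_n-u)\chi_{\{|u_n-u|\le k\}}$, I would show via $\nabla u_n\weak\nabla u$ in the appropriate dual pairings combined with dominated convergence (since $\chi_{\{|u_n-u|\le k\}}\to 1$ a.e.) that the ``cross'' integral involving $\mathcal{A}(x,\nabla u)$ vanishes. Hence
\[
\into \big[\mathcal{A}(x,\nabla u_n)-\mathcal{A}(x,\nabla u)\big]\cdot\nabla(u_n-u)\,\chi_{\{|u_n-u|\le k\}}\diff x \to 0.
\]

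The integrand above decomposes into two non-negative summands by monotonicity of $\xi\mapsto|\xi|^{p-2}\xi$ and $\xi\mapsto|\xi|^{q-2}\xi$, so each summand vanishes separately. Applying Simon's inequality to the $p$-summand shows that $\nabla u_n\to\nabla u$ in measure on $\{|u_n-u|\le k\}$. Since $|\{|u_n-u|>k\}|\to 0$ for any fixed $k>0$ (by a.e.\,convergence on the bounded domain $\Omega$), convergence in measure extends to all of $\Omega$, and extracting one more subsequence gives $\nabla u_n(x)\to\nabla u(x)$ a.e.\,in $\Omega$, as required.
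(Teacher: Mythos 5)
Your proposal is correct and rests on the same Boccardo--Murat truncation skeleton as the paper's proof: test $J_\lambda'(u_n)$ against $T_k(u_n-u)$ (bounded in $\WH$ uniformly in $n$), remove the cross term $\into\big(|\nabla u|^{p-2}\nabla u+\mu(x)|\nabla u|^{q-2}\nabla u\big)\cdot\nabla T_k(u_n-u)\diff x$ via the weak convergence $\nabla T_k(u_n-u)\weak 0$ in $[\LH]^N$, and finish with Simon's inequality. The genuine difference lies in the treatment of the critical term and in the endgame. The paper only estimates $\big|\into|u_n|^{p^*-2}u_nT_k(u_n-u)\diff x\big|\leq Ck$ uniformly in $n$, and must then compensate by the H\"older splitting of $\into e_n^\theta\diff x$, $\theta\in(0,1)$, over the sets $S_n^k=\{|u_n-u|\leq k\}$ and $G_n^k=\{|u_n-u|>k\}$ (here $e_n=(|\nabla u_n|^{p-2}\nabla u_n-|\nabla u|^{p-2}\nabla u)\cdot\nabla(u_n-u)\geq0$), followed by the limit $k\to0^+$. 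You instead observe that, for each fixed $k$, $T_k(u_n-u)\to0$ strongly in $\Lp{p^*}$ by dominated convergence (constant majorant $k$ on the bounded domain), so H\"older against $\{|u_n|^{p^*-1}\}_{n\in\N}$, bounded in $\Lp{(p^*)'}$, makes the critical term vanish as $n\to\infty$; this is a valid and slightly sharper use of the $L^\infty$-bound on the truncation than the paper's $Ck$ estimate, and it lets you conclude $\int_{S_n^k}e_n\diff x\to0$ for a single fixed $k$, then convergence in measure of the gradients (using $|G_n^k|\to0$) and a.\,e.\ convergence along a subsequence, dispensing entirely with the $\theta$-power device and the passage $k\to0^+$. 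Two small points deserve a line in a careful write-up: since $S_n^k$ depends on $n$, state the intermediate conclusion as $|\{x\in S_n^k:\,|\nabla u_n(x)-\nabla u(x)|>\delta\}|\to0$ and then add $|G_n^k|\to0$; and for $1<p<2$ the passage from $\int_{S_n^k}e_n\diff x\to0$ to this smallness in measure needs the extra Chebyshev argument controlling the denominator $\big(|\nabla u_n|^p+|\nabla u|^p\big)^{\frac{2-p}{p}}$ through the $L^p$-boundedness of the gradients --- the same care the paper implicitly needs when passing from $e_n\to0$ a.\,e.\ to $\nabla u_n\to\nabla u$ a.\,e.\ via \eqref{LG5}.
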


\begin{proof}
	Since $\{u_n\}_{n\in\N}$ is bounded in $\WH$, by Proposition \ref{proposition_embeddings}(ii) and Br\'{e}zis \cite[Theorem 4.9]{Brezis-2011} along with the reflexivity of $\WH$, there exists a subsequence, still denoted by $\{u_n\}_{n\in\N}$, and $u\in \WH$ such that
	\begin{equation}\label{3.10}
		\begin{aligned}
			u_n&\weak u\quad \text{in }\WH, 
			\quad &\nabla u_n&\weak\nabla u \quad \text{in }\left[\LH\right]^N,\\
			u_n&\to u\quad \text{in }L^r(\Omega),
			\quad &u_n(x)&\to u(x)\quad \text{a.\,e.\,in }\Omega,\\
			|u_n(x)|& \leq h(x)\quad\text{a.\,e.\,in }\Omega,
		\end{aligned}
	\end{equation}
	as $n\to\infty$ with $r\in[1,p^*)$ and $h\in L^q(\Omega)$.
	
	For any $k\in\N$, let $T_k\colon\R\to\R$ be the truncation function defined by
	\begin{align*}
		T_k(t):=
		\begin{cases}
			t & \text{if } |t|\leq k,\\[1ex]
			\displaystyle k\frac{t}{|t|} &\text{if }|t|> k.
		\end{cases}
	\end{align*}
	Let $k\in\N$  be fixed. Then, since $\{u_n\}_{n\in\N}$ is a \textnormal{(PS)}$_c$ sequence for $J_\lambda$, we have
	\begin{equation}\label{LG1}
		\begin{aligned}
			o(1)&=\langle J'_\lambda(u_n),T_k(u_n-u)\rangle\\
			&= \int_\Omega\left(|\nabla u_n|^{p-2}\nabla u_n+\mu(x)|\nabla u_n|^{q-2}\nabla u_n\right)\cdot\nabla T_k(u_n-u)\,\mathrm{d}x\\
			&\quad-\lambda\int_\Omega|u_n|^{\vartheta-2}u_nT_k(u_n-u)\,\mathrm{d}x-\int_\Omega|u_n|^{p^*-2}u_nT_k(u_n-u)\,\mathrm{d}x,
		\end{aligned}
	\end{equation}
	as $n\to\infty$, because $\{T_k(u_n-u)\}_{n\in\N}$ is bounded in $\WH$.
	By H\"older's inequality, we see that the functional
	\begin{align*}
		G\colon g\in\left[L^{\mathcal H}(\Omega)\right]^N\mapsto\int_\Omega\left(|\nabla u|^{p-2}\nabla u+\mu(x)|\nabla u|^{q-2}\nabla u\right)\cdot g\,\,\mathrm{d}x
	\end{align*}
	is linear and bounded.
	From \eqref{3.10} we see that $\nabla T_k(u_n-u)\rightharpoonup0$ in $\left[L^{\mathcal H}(\Omega)\right]^N$, so we can get
	\begin{equation}\label{LG2}
		\lim_{n\to\infty}\int_\Omega\left(|\nabla u|^{p-2}\nabla u+\mu(x)|\nabla u|^{q-2}\nabla u\right)\cdot\nabla T_k(u_n-u)\,\mathrm{d}x=0.
	\end{equation}
	By the boundedness of $\{u_n\}_{n\in \N}$ and by Proposition \ref{proposition_embeddings}(ii), we also observe that for any $n\in\mathbb N$
	\begin{equation}\label{LG3}
		\left|\int_\Omega|u_n|^{p^*-2}u_nT_k(u_n-u)\,\mathrm{d}x\right|\leq k\int_\Omega|u_n|^{p^*-1}\,\mathrm{d}x\leq Ck
	\end{equation}
	with a constant $C>0$ independent of $n$ and $k$. Thus, by using \eqref{3.10}, \eqref{LG1} and \eqref{LG2}, we get
	\begin{equation}\label{LG4}
		\begin{aligned}
			&\limsup_{n\to\infty}\Bigg [ \int_\Omega\left[|\nabla u_n|^{p-2}\nabla u_n-|\nabla u|^{p-2}\nabla u\right] \cdot\nabla T_k(u_n-u)\,\mathrm{d}x \\
			& \qquad \qquad +\into 
			\mu(x)\left[|\nabla u_n|^{q-2}\nabla u_n-|\nabla u|^{q-2}\nabla u\right]\cdot\nabla T_k(u_n-u)\,\mathrm{d}x\Bigg ]\\
			&=\limsup_{n\to\infty}\int_\Omega|u_n|^{p^*-2}u_nT_k(u_n-u)\,\mathrm{d}x.
		\end{aligned}
	\end{equation}
	From Simon \cite[formula (2.2)]{Simon-1978} we have the well-known  inequalities

	\begin{equation}\label{LG5}
		(|\xi|^{r-2}\xi-|\eta|^{r-2}\eta)\cdot(\xi-\eta)\geq
		\begin{cases}
			\kappa_r|\xi-\eta|^r & \text{if }r\geq2,\\[1ex]
			\displaystyle \kappa_r\frac{|\xi-\eta|^2}{\left(|\xi|^r+|\eta|^r\right)^{\frac{2-r}{r}}} & \text{if }1<r<2,
		\end{cases}
	\end{equation}
	for any $\xi$, $\eta\in\R^N$ with a constant $\kappa_r>0$.
	Thus, by \eqref{LG3}, \eqref{LG4} and \eqref{LG5}, we obtain
	\begin{equation}\label{LG6}
		\begin{aligned}
			&\limsup_{n\to\infty}\int_\Omega\left(|\nabla u_n|^{p-2}\nabla u_n-|\nabla u|^{p-2}\nabla u\right)\cdot\nabla T_k(u_n-u)\,\mathrm{d}x\\
			&\leq
			\limsup_{n\to\infty}\int_\Omega|u_n|^{p^*-2}u_nT_k(u_n-u)\,\mathrm{d}x\\
			&\leq Ck.
		\end{aligned}
	\end{equation}
	We set
	\begin{align*}
		e_n(x):=\left(|\nabla u_n(x)|^{p-2}\nabla u_n(x)-|\nabla u(x)|^{p-2}\nabla u(x)\right)\cdot\nabla (u_n(x)-u(x)).
	\end{align*}
	Thanks to \eqref{LG5} we see that $e_n(x)\geq 0$ a.\,e.\,in $\Omega$. We split $\Omega$ by
	\begin{align*}
		S_n^k=\left\{x\in\Omega:\,\,|u_n(x)-u(x)|\leq k\right\}
		\quad\text{and}\quad
		G_n^k=\left\{x\in\Omega:\,\,|u_n(x)-u(x)|> k\right\},
	\end{align*}
	where $n,k \in \N$ are fixed. Taking $\theta\in(0,1)$ and using H\"older's inequality as well as the boundedness of $\{e_n\}_{n\in\N}$ in $\Lp{1}$ along with \eqref{LG6} gives
	\begin{align*}
		\int_\Omega e_n^\theta\,\mathrm{d}x
		&\leq\left(\int_{S_n^k} e_n\,\mathrm{d}x\right)^\theta|S_n^k|^{1-\theta}+\left(\int_{G_n^k} e_n\,\mathrm{d}x\right)^\theta|G_n^k|^{1-\theta}\\
		&\leq(kC)^\theta|S_n^k|^{1-\theta}+\widetilde{C}^\theta|G_n^k|^{1-\theta}.
	\end{align*}
	From this, noticing that $|G_n^k|\to0$ as $n\to\infty$, we get
	\begin{align*}
		0\leq\limsup_{n\to\infty}\int_\Omega e_n^\theta \,\mathrm{d}x\leq(kC)^\theta|\Omega|^{1-\theta}.
	\end{align*}
	Letting $k\to0^+$, we obtain that $e_n^\theta\to0$ in $L^1(\Omega)$ as $n\to\infty$. Thus, we may assume that $e_n(x)\to 0$ a.\,e.\,in $\Omega$. Applying \eqref{LG5} proves the assertion of the lemma.
\end{proof}

\begin{lemma}\label{Lps} 
	Let \eqref{assumptions} be satisfied and let $c<0$. Then, there exists $\lambda_0 > 0$ such that for any $\lambda\in(0,\lambda_0)$, the functional $J_\lambda$ satisfies the
	\textnormal{(PS)}$_c$ condition.
\end{lemma}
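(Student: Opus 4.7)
The plan is to follow the standard four-step strategy for critical problems: boundedness of a \textnormal{(PS)}$_c$ sequence, identification of its weak limit as a weak solution, a Br\'ezis--Lieb splitting, and exclusion of concentration through the sign of $c$ and the smallness of $\lambda$.

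Let $\{u_n\}_{n\in\N}\subset\WH$ satisfy \eqref{palais-smale}. First I would form the combination $J_\lambda(u_n)-(1/p^*)\langle J'_\lambda(u_n),u_n\rangle=c+o(1)+o(\|u_n\|_{1,\mathcal H,0})$, whose left-hand side dominates $(1/q-1/p^*)\varrho_{\mathcal H}(\nabla u_n)-\lambda C\|u_n\|_{\vartheta}^{\vartheta}$ after discarding the more favourable $p$-coefficient. Proposition \ref{proposition_embeddings}(ii) gives $\|u_n\|_{\vartheta}^{\vartheta}\le C\|u_n\|_{1,\mathcal H,0}^{\vartheta}$ with $\vartheta<p$, and together with Proposition \ref{proposition_modular_properties}(iv) this yields boundedness of $\{u_n\}_{n\in\N}$ in $\WH$. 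Along a subsequence, $u_n\rightharpoonup u$ in $\WH$, $u_n\to u$ in $L^r(\Omega)$ for every $r\in[1,p^*)$ and a.\,e.\ in $\Omega$; Lemma \ref{Lgradients} then provides $\nabla u_n\to\nabla u$ a.\,e.\ in $\Omega$.

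Next, the a.\,e.\ convergence of gradients together with the $L^{p'}$- and weighted $L^{q'}$-boundedness of $|\nabla u_n|^{p-2}\nabla u_n$ and $\mu|\nabla u_n|^{q-2}\nabla u_n$ forces weak convergence of these sequences to their pointwise limits in the respective dual spaces. This lets me pass to the limit in $\langle J'_\lambda(u_n),\varphi\rangle\to 0$ for every $\varphi\in C_c^\infty(\Omega)$, so $u$ is a weak solution of \eqref{problem} and in particular $\langle J'_\lambda(u),u\rangle=0$. Setting $v_n:=u_n-u$, the vector Br\'ezis--Lieb lemma applied to $\nabla u_n$ in $L^p(\Omega;\R^N)$ and in $L^q(\Omega,\mu\,dx;\R^N)$, and to $u_n$ in $L^{p^*}(\Omega)$, combined with the strong $L^{\vartheta}$-convergence $u_n\to u$, yields
\begin{align*}
J_\lambda(u)+\tfrac{1}{p}\|\nabla v_n\|_p^p+\tfrac{1}{q}\|\nabla v_n\|_{q,\mu}^q-\tfrac{1}{p^*}\|v_n\|_{p^*}^{p^*}&=c+o(1),\\
\|\nabla v_n\|_p^p+\|\nabla v_n\|_{q,\mu}^q-\|v_n\|_{p^*}^{p^*}&=o(1).
\end{align*}
Passing to a further subsequence so that $\|\nabla v_n\|_p^p+\|\nabla v_n\|_{q,\mu}^q\to\ell$ (hence also $\|v_n\|_{p^*}^{p^*}\to\ell$), the continuous embedding $\WH\hookrightarrow W_0^{1,p}(\Omega)$ of Proposition \ref{proposition_embeddings}(i) combined with the classical Sobolev inequality $S\|v\|_{p^*}^p\le\|\nabla v\|_p^p$ gives $S\ell^{p/p^*}\le\ell$, i.\,e.\ the dichotomy $\ell=0$ or $\ell\ge S^{N/p}$.

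The hardest step is to rule out the concentrating alternative, and this is where the sign of $c$ and the smallness of $\lambda$ enter. Using $\langle J'_\lambda(u),u\rangle=0$ I would rewrite
\begin{align*}
J_\lambda(u)=\left(\tfrac{1}{p}-\tfrac{1}{p^*}\right)\|\nabla u\|_p^p+\left(\tfrac{1}{q}-\tfrac{1}{p^*}\right)\|\nabla u\|_{q,\mu}^q-\lambda\left(\tfrac{1}{\vartheta}-\tfrac{1}{p^*}\right)\|u\|_{\vartheta}^{\vartheta},
\end{align*}
bound the first two terms below by $(1/q-1/p^*)\varrho_{\mathcal H}(\nabla u)$ and then by powers of $\|u\|_{1,\mathcal H,0}$ via Proposition \ref{proposition_modular_properties}, and bound the last term above by $\lambda C\|u\|_{1,\mathcal H,0}^{\vartheta}$ via Proposition \ref{proposition_embeddings}(ii). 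Minimizing the resulting scalar function of $t=\|u\|_{1,\mathcal H,0}$ produces a lower bound $J_\lambda(u)\ge -K\lambda^{\alpha}$ with $K,\alpha>0$ independent of $\lambda$. In the concentrating case this gives
\begin{align*}
c\ge J_\lambda(u)+\left(\tfrac{1}{q}-\tfrac{1}{p^*}\right)\ell\ge -K\lambda^{\alpha}+\left(\tfrac{1}{q}-\tfrac{1}{p^*}\right)S^{N/p},
\end{align*}
whose right-hand side is strictly positive once $\lambda_0>0$ is chosen so that $K\lambda_0^{\alpha}<(1/q-1/p^*)S^{N/p}$, contradicting $c<0$. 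Hence $\ell=0$, and Proposition \ref{proposition_modular_properties}(v) upgrades $\varrho_{\mathcal H}(\nabla v_n)\to 0$ to $\|v_n\|_{1,\mathcal H,0}\to 0$, giving the \textnormal{(PS)}$_c$ property.
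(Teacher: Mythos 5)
Your argument is correct and follows the same overall skeleton as the paper (boundedness via $J_\lambda(u_n)-\frac{1}{p^*}\langle J'_\lambda(u_n),u_n\rangle$, a.\,e.\ gradient convergence from Lemma \ref{Lgradients}, Br\'ezis--Lieb splitting, the Sobolev dichotomy $\ell=0$ or $\ell\geq S^{N/p}$, and the sign of $c$ to exclude concentration), but the exclusion step is implemented differently. The paper never identifies the weak limit $u$ as a critical point: it tests $J'_\lambda(u_n)$ directly against $u_n-u$, using \eqref{LG10}--\eqref{LG11} and \eqref{Br} to reach \eqref{formula}, and then estimates $J_\lambda(u_n)-\frac{1}{q}\langle J'_\lambda(u_n),u_n\rangle$, absorbing the sublinear term via H\"older's and Young's inequalities, $\lambda\bigl(\frac{1}{\vartheta}-\frac{1}{q}\bigr)\|u\|_\vartheta^\vartheta\leq\bigl(\frac{1}{q}-\frac{1}{p^*}\bigr)\|u\|_{p^*}^{p^*}+|\Omega|\bigl(\frac{1}{q}-\frac{1}{p^*}\bigr)^{-\vartheta/(p^*-\vartheta)}\bigl[\lambda\bigl(\frac{1}{\vartheta}-\frac{1}{q}\bigr)\bigr]^{p^*/(p^*-\vartheta)}$, which produces the explicit threshold \eqref{main} for $\lambda_0$. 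You instead pass to the limit in $\langle J'_\lambda(u_n),\varphi\rangle$ (which requires the weak $L^{p'}$-, weighted $L^{q'}$- and $L^{(p^*)'}$-convergences you invoke, all standard given Lemma \ref{Lgradients}), use $\langle J'_\lambda(u),u\rangle=0$ to obtain the split energy identities, and control the $\lambda$-term by minimizing $t\mapsto\bigl(\frac{1}{q}-\frac{1}{p^*}\bigr)\min\{t^p,t^q\}-\lambda Ct^\vartheta$ via Propositions \ref{proposition_modular_properties} and \ref{proposition_embeddings}, yielding $J_\lambda(u)\geq-K\lambda^\alpha$ and hence an implicit smallness condition on $\lambda_0$ (note only that, because of the two modular regimes, your single exponent $\alpha$ should be read as, say, $\min\{p/(p-\vartheta),q/(q-\vartheta)\}$ for $\lambda\leq1$, or the bound stated with two powers of $\lambda$ --- a cosmetic point). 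Your route costs the extra weak-solution identification but gives the cleaner inequality $c\geq J_\lambda(u)+\bigl(\frac{1}{q}-\frac{1}{p^*}\bigr)\ell$ and bypasses the Young-inequality bookkeeping; the paper's route is shorter on the functional-analytic side and yields a fully explicit $\lambda_0$.
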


\begin{proof}
	Let $\lambda_0>0$ be sufficiently small such that
	\begin{equation}\label{main}
		|\Omega|\left(\frac{1}{q}-\frac{1}{p^*}\right)^{\frac{-p^*}{p^*-\vartheta}}
		\left[\lambda_0\left(\frac{1}{\vartheta}-\frac{1}{q}\right)\right]^{\frac{p^*}{p^*-\vartheta}}<S^{\frac{p^*}{p^*-p}},
	\end{equation}
	where $S$ is the best constant of the Sobolev embedding $W_0^{1,p}(\Omega)\hookrightarrow L^{p^*}(\Omega)$, namely
	\begin{equation}\label{S}
		S:=\inf_{u\in
			W_0^{1,p}(\Omega)\setminus\{0\}}\frac{\|\nabla u\|_p^p}{\|u\|_{p^*}^p}.
	\end{equation}

	Let $\lambda\in(0,\lambda_0)$ and let $\{u_n\}_{n\in\N}$ be a \textnormal{(PS)}$_c$ sequence in $\WH$. We first show that $\{u_n\}_{n\in\N}$ is bounded in $\WH$. Arguing by contradiction, going to a subsequence still denoted by $\{u_n\}_{n\in\N}$, we may suppose that $\lim\limits_{n\to\infty}\|u_n\|_{1,\mathcal{H},0}=\infty$ and $\|u_n\|_{1,\mathcal{H},0}\geq1$ for any $n\geq k$ with $k\in\mathbb N$ sufficiently large.
	Thus, according to Proposition \ref{proposition_embeddings}(ii), we get
	\begin{align*}
		J_\lambda(u_n)&-\frac{1}{p^*}\langle J'_\lambda(u_n), u_n\rangle\\
		&=\left(\frac{1}{p}-\frac{1}{p^*}\right)\|\nabla u_n\|_p^p
		+\left(\frac{1}{q}-\frac{1}{p^*}\right)\|\nabla u_n\|_{q,\mu}^q
		-\lambda\left(\frac{1}{\vartheta}-\frac{1}{p^*}\right)\|u_n\|_\vartheta^\vartheta\\
		& \geq\left(\frac{1}{q}-\frac{1}{p^*}\right)
		\varrho_{\mathcal H}(\nabla u_n)
		-\lambda C_\vartheta\|u_n\|_{1,\mathcal{H},0}^\vartheta.
	\end{align*}
	Thus, by \eqref{palais-smale} and Proposition \ref{proposition_modular_properties}(iv) there exist $c_1$, $c_2>0$ such that as $n\rightarrow\infty$,
	\begin{align*}
		c_1+c_2\|u_n\|_{1,\mathcal{H},0}+o(1)\geq \left(\frac{1}{q}-\frac{1}{p^*}\right)\|u_n\|_{1,\mathcal{H},0}^p
		-\lambda C_\vartheta\|u_n\|_{1,\mathcal{H},0}^\vartheta,
	\end{align*}
	which is a contradiction since $p^*>q>p>\vartheta>1$.
	
	Hence, $\{u_n\}_{n\in\N}$ is bounded in $\WH$. By Proposition \ref{proposition_embeddings}(ii), Lemma \ref{Lgradients}, Br\'{e}zis \cite[Theorem 4.9]{Brezis-2011} and the reflexivity of $W^{1,\mathcal H}_0(\Omega)$, there exists a subsequence, still denoted by $\{u_n\}_{n\in\N}$, and $u\in W^{1,\mathcal H}_0(\Omega)$ such that
	\begin{equation}\label{1}
		\begin{aligned}
			u_n& \weak u\quad \text{in }\WH,\quad
			& \nabla u_n &\weak \nabla u\quad \text{in }\left[L^{\mathcal H}(\Omega)\right]^N,\\
			\nabla u_n(x)& \to \nabla u(x) \quad \text{a.\,e.\,in }\Omega,
			\quad & u_n& \to u\quad \text{in }L^r(\Omega),\\
			u_n(x)&\to u(x)\quad \text{a.\,e.\,in }\Omega,
			\quad &\|u_n-u\|_{p^*}&\to\ell,
		\end{aligned}
	\end{equation}
	as $n\to\infty$  with $r\in[1,p^*)$. Let $A$ be the nodal set of the weight function $\mu(\cdot)$ given by
	\begin{align*}
		A=\left\{x\in\Omega\,: \,\mu(x)=0\right\}.
	\end{align*}
	As $\mu(\cdot)$ is a Lipschitz continuous function by \eqref{assumptions}, we know that $\Omega\setminus A$ is an open subset of $\R^N$. 
	
	Since the sequence $\{|\nabla u_n|^{p-2}\nabla u_n\}_{n\in\N}$ is bounded in $L^{p'}(\Omega)$, by \eqref{1} we get
	\begin{equation}\label{LG10}
		\lim_{n\to\infty}\int_\Omega|\nabla u_n|^{p-2}\nabla u_n\cdot\nabla u\,\,\mathrm{d}x=\|\nabla u\|_p^p.
	\end{equation}
	Because of the boundedness of $\{|\nabla u_n|^{q-2}\nabla u_n\}_{n\in\N}$ in $L^{q'}(\Omega\setminus A,\mu(x)\,\mathrm{d}x)$, by using \eqref{1} and Autuori-Pucci \cite[Proposition A.8]{Autuori-Pucci-2013} we conclude that
	\begin{align}\label{LG11}
		\begin{split}
			&\lim_{n\to\infty}\int_\Omega \mu(x)|\nabla u_n|^{q-2}\nabla u_n\cdot\nabla u\,\,\mathrm{d}x\\
			&= \lim_{n\to\infty}\int_{\Omega\setminus A} \mu(x)|\nabla u_n|^{q-2}\nabla u_n\cdot\nabla u\,\,\mathrm{d}x=\|\nabla u\|_{q,\mu}^q.
		\end{split}
	\end{align}
	Furthermore, using \eqref{3.10} and the Lemma of Br\'ezis-Lieb in Papageorgiou-Winkert \cite{Papageorgiou-Winkert-2018}, we obtain
	\begin{align}\label{Br}
		\begin{split}
			\|\nabla u_j\|_p^p-\|\nabla u_j-\nabla u\|_p^p&=\|\nabla u\|_p^p+o(1),\\
			\|\nabla u_j\|_{q,\mu}^q-\|\nabla u_j-\nabla u\|_{q,\mu}^q&=\|\nabla u\|_{q,\mu}^q+o(1),\\
			\|u_j\|_{p^*}^{p^*}-\|u_j-u\|_{p^*}^{p^*}&=\|u\|_{p^*}^{p^*}+o(1),
		\end{split}
	\end{align}
	as $n\to\infty$.  
	Thus, by \eqref{1}, \eqref{LG10} and \eqref{LG11}, we get
	\begin{align*}
			o(1)&=\langle J'_\lambda(u_n),u_n-u\rangle\\
			&=
			\int_\Omega\left(|\nabla u_n|^{p-2}\nabla u_n+\mu(x)|\nabla u_n|^{q-2}\nabla u_n\right)\cdot(\nabla u_n-\nabla u)\,\mathrm{d}x\\
			&\quad -\lambda\int_\Omega |u_n|^{r-2}(u_n-u)\,\mathrm{d}x-\int_\Omega |u_n|^{p^*-2}(u_n-u)\,\mathrm{d}x\\
			&=\|\nabla u_n\|_p^p-\|\nabla u\|_p^p
			+\|\nabla u_n\|_{q,\mu}^p-\|\nabla u\|_{q,\mu}^p
			-\|u_n\|_{p^*}^{p^*}-\|u\|_{p^*}^{p^*}+o(1)
		\end{align*}
	as $n\to\infty$. 
	Hence, by \eqref{1} and \eqref{Br} it follows that
	\begin{equation}\label{formula}
		\|\nabla u_n-\nabla u\|_p^p+\|\nabla u_n-\nabla u\|_{q,\mu}^q
		=\|u_n-u\|_{p^*}^{p^*}+o(1)=\ell^{p^*}+o(1)
	\end{equation}
	as $n\to\infty$.
	
	Now, assume for contradiction that $\ell>0$. By Proposition \ref{proposition_embeddings}(i), \eqref{S} and \eqref{formula}, we see that  $\ell^{p^*}\ge S\ell^p$ which implies
	\begin{equation}\label{ll}
		\ell\geq S^{\frac{1}{p^*-p}}.
	\end{equation}
	For any $n\in\mathbb N$ we have
	\begin{align*}
			&J_\lambda(u_n)-\frac{1}{q}\left\langle J'_\lambda(u_n),u_n\right\rangle\\
			&=\left(\frac{1}{p}-\frac{1}{q}\right)\|\nabla u_n\|_p^p
			-\lambda\left(\frac{1}{\vartheta}-\frac{1}{q}\right)\|u_n\|_\vartheta^\vartheta+\left(\frac{1}{q}-\frac{1}{p^*}\right)\|u_n\|^{p^*}_{p^*}.
	\end{align*}
	From this, as $n\to\infty$, by \eqref{palais-smale}, \eqref{1}, \eqref{Br}, H\"older's and Young's inequality, we obtain
	\begin{align*}
			c&\geq\left(\frac{1}{q}-\frac{1}{p^*}\right)\left(\ell^{p^*}+\|u\|^{p^*}_{p^*}\right)
			-\lambda\left(\frac{1}{\vartheta}-\frac{1}{q}\right)\|u\|^\vartheta_\vartheta\\
			&\geq\left(\frac{1}{q}-\frac{1}{p^*}\right)\left(\ell^{p^*}+\|u\|^{p^*}_{p^*}\right)
			-\lambda\left(\frac{1}{\vartheta}-\frac{1}{q}\right)|\Omega|^{\frac{p^*-\vartheta}{p^*}}\|u\|^\vartheta_{p^*}\\
			&\geq\left(\frac{1}{q}-\frac{1}{p^*}\right)\left(\ell^{p^*}+\|u\|^{p^*}_{p^*}\right)
			-\left(\frac{1}{q}-\frac{1}{p^*}\right)\|u\|^{p^*}_{p^*}\\
			&\quad-|\Omega|\left(\frac{1}{q}-\frac{1}{p^*}\right)^{-\frac{\vartheta}{p^*-\vartheta}}\left[\lambda\left(\frac{1}{\vartheta}-\frac{1}{q}\right)\right]^{\frac{p^*}{p^*-\vartheta}}.
	\end{align*}
	Finally, by \eqref{ll} we get
	\begin{align*}
		0>c\geq\left(\frac{1}{q}-\frac{1}{p^*}\right)S^{\frac{p^*}{p^*-p}} -
		|\Omega|\left(\frac{1}{q}-\frac{1}{p^*}\right)^{-\frac{\vartheta}{p^*-\vartheta}}
		\left[\lambda\left(\frac{1}{\vartheta}-\frac{1}{q}\right)\right]^{\frac{p^*}{p^*-\vartheta}}>0,
	\end{align*}
	where the last inequality follows from \eqref{main}. This gives a contradiction and shows that $\ell=0$. Hence, by \eqref{formula} and Proposition \ref{proposition_modular_properties}(v),  the result follows.
\end{proof}

%\textcolor{red}{
Note that the energy functional $J_\lambda\colon \WH\to \R$ is not bounded from below. 
% Indeed, by Proposition \ref{proposition_modular_properties}(iii) along with Proposition \ref{proposition_embeddings}(ii) we obtain
% \begin{equation*}
% 	\begin{split}
% 		J_\lambda(u)
% 		&=\frac{1}{p}\|\nabla u\|_p^p+\frac{1}{q}\|\nabla u\|_{q,\mu}^q-\frac{\lambda}{\vartheta}\|u\|_{\vartheta}^{\vartheta}-\frac{1}{p^{*}}\|u\|_{p^*}^{p^*}\\
% 				&\geq \frac{1}{q}\|u\|_{1,\mathcal{H},0}^p -\frac{\lambda}{\vartheta} C_\vartheta \|u\|_{1,\mathcal{H},0}^\vartheta-\frac{1}{p^*}C_{p^*} \|u\|_{1,\mathcal{H},0}^{p^*}\to\infty\quad\mbox{as }\|u\|_{1,\mathcal{H},0}\to\infty,
% 	\end{split}
% \end{equation*}
% since $p^*>q>p>\vartheta$.}
For this, we will use the treatment of Garc\'{\i}a Azorero-Peral Alonso \cite{Garcia-Azorero-Peral-Alonso-1991} following ideas from Figueiredo-Santos J\'{u}nior-Su\'{a}rez \cite{Figueiredo-Santos-Junior-Suarez-2015} and Zhang-Fiscella-Liang \cite{Zhang-Fiscella-Liang-2019}  in order to obtain a lower bound for a special truncated functional related to $J_\lambda$.
Let us define $\beta_\lambda \colon [0,\infty)\to\R$ by
\begin{align*}
	\beta_\lambda(t):=\frac{1}{q}t^q-\frac{\lambda}{\vartheta}  C_\vartheta t^\vartheta-\frac{1}{p^*}C_{p^*}t^{p^*},
\end{align*}
where $C_\vartheta$ and $C_{p^*}$ are the constants mentioned in Remark \ref{remark-constants}.
Since $\vartheta<q$ we see that $\beta_\lambda(t)<0$ for $t$ near zero and due to $1<\vartheta<p<q<p^*$ there exists $\lambda_1>0$ such that $\beta_\lambda$ attains its positive maximum for any $\lambda \in(0,\lambda_1)$. Let $R_0(\lambda)$ and  $R_1(\lambda)$ be the unique roots of $\beta_\lambda$ such that $0<R_0(\lambda)<R_1(\lambda)$.% (see Figure \ref{betalambda}).

% \begin{figure}[h!]
% 	\begin{tikzpicture}
% 		\begin{axis} [axis lines=center,xticklabels=\empty,yticklabels=\empty, xmin=0,xmax=1.2,ymax=0.13,ytick style={draw=none}, xtick style={draw=none},tick,xlabel style={below right},xlabel={$t$}]
% 			%N=6,p=3,p^*=6, p_*=5
% 			\addplot [domain=0:1, smooth, thick] { 1/3*x^3-1/7*x^2-1/4*x^12 } node[midway,above left] {$\beta_\lambda(t)$};
% 			\addplot[mark=*,color=red,] coordinates {(0.43,0)} node[midway,below right] {$R_0(\lambda)$};
% 			\addplot[mark=*,color=red,] coordinates {(0.968,0)} node[midway,below right] {$R_1(\lambda)$};
%			
% 			%\addplot[mark=*,color=red,] coordinates {(0,1)} node[midway,above left] {\small $\Psi(0)=\frac{1}{p2^{p-1}r_F^p}$};
% 		\end{axis}
% 	\end{tikzpicture}
% 	\caption{Graph of the function $\beta_\lambda$}\label{betalambda}
% \end{figure}
Then, we have the following claim.

\noindent {\bf Claim:} {\em $R_0(\lambda)\to 0$ as $\lambda\to 0$}.

From $\beta_\lambda(R_0(\lambda))=0$ and $\beta_\lambda'(R_0(\lambda))>0$ we have
\begin{align}\label{root-1}
	\frac{1}{q}R_0(\lambda)^q=\frac{\lambda}{\vartheta}  C_\vartheta R_0(\lambda)^\vartheta+\frac{1}{p^*}C_{p^*}R_0(\lambda)^{p^*}
\end{align}
and
\begin{align}\label{root-2}
	R_0(\lambda)^{q-1}>\lambda C_\vartheta R_0(\lambda)^{\vartheta-1}+C_{p^*}R_0(\lambda)^{p^*-1}
\end{align}
for any $\lambda \in (0,\lambda_1)$. From \eqref{root-1} we know that $R_0(\lambda)$ is bounded since $\vartheta<q<p^*$. Suppose that $R_0(\lambda)\to R>0$ as $\lambda \to 0$. Then we obtain from \eqref{root-1} and \eqref{root-2}
\begin{align*}
	\frac{1}{q}R^q= \frac{1}{p^*}C_{p^*} R^{p^*}
	\quad\text{and}\quad 
	R^{q-1}\geq C_{p^*}R^{p^*-1},
\end{align*}
which is a contradiction since $q<p^*$. This proves the Claim.

From the Claim, there exists $\lambda_2>0$ such that $R_0(\lambda)<1$ for any $\lambda\in(0,\lambda_2)$. This implies that $R_0(\lambda)<\min\{R_1(\lambda),1\}$. Thus, for any $\lambda\in(0,\min\{\lambda_1,\lambda_2\})$ we choose a $C^\infty$-function $\tau\colon [0,\infty)\to [0,1]$ such that
\begin{align}\label{def-tau}
	\tau(t):=
	\begin{cases}
		1 & \text{if } t \in [0,R_0(\lambda)],\\
		0 & \text{if }t \in [\min\{R_1(\lambda),1\},\infty).
	\end{cases}
\end{align}
Then, we can introduce following truncated energy functional
\begin{align*}
	\widehat{J}_\lambda(u):=\frac{1}{p}\|\nabla u\|_p^p+\frac{1}{q}\|\nabla u\|_{q,\mu}^q-\frac{\lambda}{\vartheta}\|u\|_{\vartheta}^{\vartheta}-\frac{1}{p^{*}}\|u\|_{p^*}^{p^*}\tau(\|u\|_{1,\mathcal{H},0}).
\end{align*}
It is clear that $\widehat{J}_\lambda \in C^1(\WH,\R)$ is coercive and bounded from below. Also, note that if $\|u\|_{1,\mathcal{H},0}\leq R_0(\lambda)< \min\{R_1(\lambda),1\}$, then $\widehat{J}_\lambda(u)=J_\lambda(u)$.

Thus, we have the following technical result.

\begin{lemma}\label{lemma_truncated_functional}
	Let \eqref{assumptions} be satisfied. Then there exists $\lambda^*>0$ such that for any $\lambda \in (0,\lambda^*)$ the following hold:
	\begin{enumerate}
		\item[\textnormal{(i)}]
		If $\widehat{J}_\lambda(u) < 0$, then $\|u\|_{1,\mathcal{H},0} <R_0(\lambda)$ and $J_\lambda(v)=\widehat{J}_\lambda(v)$ for any $v$ in a sufficiently small neighborhood of $u$;
		\item[\textnormal{(ii)}]
		$\widehat{J}_\lambda$ fulfills a local  \textnormal{(PS)}$_c$ condition for $c<0$.
	\end{enumerate}
\end{lemma}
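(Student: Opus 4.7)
My plan is to prove (i) by deriving a uniform lower bound on $\widehat{J}_\lambda(u)$ in terms of $t:=\|u\|_{1,\mathcal{H},0}$, and then to deduce (ii) from (i) together with Lemma~\ref{Lps}.

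For (i), I would start from
\begin{align*}
	\widehat{J}_\lambda(u) \geq \frac{1}{q}\varrho_{\mathcal{H}}(\nabla u) - \frac{\lambda C_\vartheta}{\vartheta}\,t^{\vartheta} - \frac{C_{p^*}}{p^*}\,t^{p^*}\tau(t),
\end{align*}
using $\frac{1}{p}\|\nabla u\|_p^p + \frac{1}{q}\|\nabla u\|_{q,\mu}^q \geq \frac{1}{q}\varrho_{\mathcal{H}}(\nabla u)$ (since $p<q$) together with Remark~\ref{remark-constants}. Proposition~\ref{proposition_modular_properties}(iii)--(iv) then gives $\varrho_{\mathcal{H}}(\nabla u) \geq t^q$ when $t\leq 1$ and $\varrho_{\mathcal{H}}(\nabla u)\geq t^p$ when $t\geq 1$. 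I would split the analysis across the intervals $[0,R_0(\lambda)]$, $[R_0(\lambda),\min\{R_1(\lambda),1\}]$, $[\min\{R_1(\lambda),1\},1]$, and $[1,\infty)$. On the first, $\tau\equiv 1$ and the bound equals $\beta_\lambda(t)$, which is allowed to be negative. On the second, $0\le\tau(t)\le 1$ gives a bound dominated below by $\beta_\lambda(t)\ge 0$, since $t\le\min\{R_1(\lambda),1\}\le R_1(\lambda)$. On the third, $\tau\equiv 0$ reduces the bound to $\tfrac{t^q}{q}-\tfrac{\lambda C_\vartheta}{\vartheta}t^\vartheta$; the equation $\beta_\lambda(R_1(\lambda))=0$ forces $R_1(\lambda)^{q-\vartheta}\ge \lambda C_\vartheta q/\vartheta$, so $t\ge R_1(\lambda)$ makes this non-negative when $R_1(\lambda)<1$, while the degenerate case $R_1(\lambda)\ge 1$ (interval is just $\{1\}$) is handled by demanding $\lambda\le \vartheta/(qC_\vartheta)$. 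Finally, on $[1,\infty)$, $\tau\equiv 0$ and the bound becomes $t^\vartheta\bigl(t^{p-\vartheta}/q - \lambda C_\vartheta/\vartheta\bigr)\ge t^\vartheta(1/q - \lambda C_\vartheta/\vartheta)\ge 0$ under the same smallness condition on $\lambda$.

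Setting $\lambda^*:=\min\{\lambda_0,\lambda_1,\lambda_2,\vartheta/(qC_\vartheta)\}$, the case analysis shows $\widehat{J}_\lambda(u)\ge 0$ whenever $\|u\|_{1,\mathcal{H},0}\ge R_0(\lambda)$, so $\widehat{J}_\lambda(u)<0$ forces $\|u\|_{1,\mathcal{H},0}<R_0(\lambda)$. The neighborhood statement then follows from continuity of $\|\cdot\|_{1,\mathcal{H},0}$: on a small enough ball around $u$ one has $\|v\|_{1,\mathcal{H},0}<R_0(\lambda)$, whence $\tau(\|v\|_{1,\mathcal{H},0})=1$ and $J_\lambda(v)=\widehat{J}_\lambda(v)$. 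For (ii), the same lower bound makes $\widehat{J}_\lambda$ coercive, so every (PS)$_c$ sequence $\{u_n\}_{n\in\N}$ is bounded in $\WH$. When $c<0$, $\widehat{J}_\lambda(u_n)<0$ holds for all large $n$, and (i) supplies a neighborhood of each such $u_n$ on which $\widehat{J}_\lambda\equiv J_\lambda$; hence $J_\lambda(u_n)=\widehat{J}_\lambda(u_n)\to c$ and $J_\lambda'(u_n)=\widehat{J}_\lambda'(u_n)\to 0$ in $(\WH)^*$, so $\{u_n\}_{n\in\N}$ is a (PS)$_c$ sequence for $J_\lambda$ at the same level $c<0$. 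Since $\lambda^*\leq\lambda_0$, Lemma~\ref{Lps} supplies a convergent subsequence.

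The principal technical obstacle is the case analysis underlying (i): the Musielak--Orlicz modular inequalities of Proposition~\ref{proposition_modular_properties} switch form at $t=1$, the support of $\tau$ depends on the relative position of $R_1(\lambda)$ and $1$, and both thresholds move with $\lambda$. Organizing these regimes so that $\widehat{J}_\lambda$ is non-negative outside $[0,R_0(\lambda))$ is the heart of the argument; once this is done, (ii) reduces to a mechanical combination of (i) and Lemma~\ref{Lps}.
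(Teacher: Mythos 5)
Your proof is correct and follows essentially the same route as the paper's: the same lower bound built from Remark \ref{remark-constants} and Proposition \ref{proposition_modular_properties}(iii)--(iv), the same smallness threshold $\vartheta/(C_\vartheta q)$ entering $\lambda^*$, and the same reduction of (ii) to Lemma \ref{Lps} via coercivity and the local identification $\widehat{J}_\lambda\equiv J_\lambda$ near negative levels. Your handling of the regime $\min\{R_1(\lambda),1\}\leq t<1$ through $\beta_\lambda(R_1(\lambda))=0$, which gives $R_1(\lambda)^{q-\vartheta}\geq \lambda C_\vartheta q/\vartheta$, is in fact a slightly more explicit version of the paper's corresponding step, but the overall argument coincides.
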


\begin{proof}
	Let $\lambda^* \leq \min\{\lambda_0,\lambda_1,\lambda_2,\lambda_3\}$, where $\lambda_0$ is given in Lemma \ref{Lps}, $\lambda_1$ and $\lambda_2$ are chosen for the definition of $\tau$ in \eqref{def-tau}, while $\lambda_3:=\frac{\vartheta}{C_\vartheta q}$ with $C_\vartheta$ mentioned in Remark \ref{remark-constants}. Let $\lambda \in (0,\lambda^*)$.
	
\noindent	\textnormal{(i)} Let $\widehat{J}_\lambda(u) < 0$. We distinguish two different cases.

	{\bf Case 1: } $\|u\|_{1,\mathcal{H},0} \geq 1$.

	This case cannot occur. Indeed, by the definition of $\tau$ in \eqref{def-tau} we know that $\tau(\|u\|_{1,\mathcal{H},0})=0$. Therefore, by Propositions \ref{proposition_modular_properties}(iv)  and \ref{proposition_embeddings}(ii) we get that
	\begin{align}\label{estimate-below10}
		\begin{split}
			\widehat{J}_\lambda(u)
			&=\frac{1}{p}\|\nabla u\|_p^p+\frac{1}{q}\|\nabla u\|_{q,\mu}^q-\frac{\lambda}{\vartheta}\|u\|_{\vartheta}^{\vartheta}
			-\frac{1}{p^{*}}\|u\|_{p^*}^{p^*}\tau(\|u\|_{1,\mathcal{H},0})\\
			&\geq \frac{1}{q}\|u\|_{1,\mathcal{H},0}^p -\frac{\lambda}{\vartheta} C_\vartheta \|u\|_{1,\mathcal{H},0}^\vartheta\\
			&=\phi_\lambda (\|u\|_{1,\mathcal{H},0}),
		\end{split}
	\end{align}
	where $\phi_\lambda \colon [1,\infty)\to\R$ is given by
	\begin{align*}
		\phi_\lambda(t):=\frac{1}{q}t^p-\frac{\lambda}{\vartheta}  C_\vartheta t^\vartheta.
	\end{align*}
	It is clear that $\phi_\lambda$ has a global minimum point at 
	\begin{align*}
		t_0=\l(\lambda \frac{C_\vartheta q}{p}\r)^{\frac{1}{p-\vartheta}}
	\end{align*}
	with
	\begin{align*}
		\phi_\lambda(t_0)=\frac{1}{q}\l(\lambda \frac{C_\vartheta q}{p}\r)^{\frac{p}{p-\vartheta}} \l(1-\frac{p}{\vartheta}\r)<0,
	\end{align*}
	since $\vartheta<p$. 
	
	% \begin{figure}[h!]
	% 	\begin{tikzpicture}
	% 		\begin{axis} [axis lines=center,xticklabels=\empty,yticklabels=\empty, xmin=0,xmax=1.2,ymin=-0.02, ymax=0.13,ytick style={draw=none}, xtick style={draw=none},tick,xlabel style={below right},xlabel={$t$}]
	% 			%N=6,p=3,p^*=6, p_*=5
	% 			\addplot [domain=0:1, smooth, thick] { 1/3*x^3-1/7*x^2} node[midway,above left] {$\phi_\lambda(t)$};
	% 			%\addplot[mark=*,color=red,] coordinates {(0.43,0)} node[midway,below right] {$R_0(\lambda)$};
	% 			\addplot[mark=*,color=red,] coordinates {(0.3,-0.003857142860)} node[midway,below right] {$\phi_\lambda(t_0)$};
				
	% 			%\addplot[mark=*,color=red,] coordinates {(0,1)} node[midway,above left] {\small $\Psi(0)=\frac{1}{p2^{p-1}r_F^p}$};
	% 		\end{axis}
	% 	\end{tikzpicture}
	% 	\caption{Graph of the function $\phi_\lambda$}\label{philambda}
	% \end{figure}
	
	We point out that $\phi_\lambda(t)\geq 0$ if and only if $t \geq \l(\lambda \frac{C_\vartheta q}{\vartheta}\r)^{\frac{1}{p-\vartheta}}$. Hence, choosing $\lambda \leq \lambda_3=\frac{\vartheta}{C_\vartheta q}$ we have $\displaystyle\min_{t\in[1,\infty]}\phi_\lambda(t)\geq 0$ which yields, joint with \eqref{estimate-below10}, that $\widehat{J}_{\lambda}(u) \geq 0$ for any $\|u\|_{1,\mathcal{H},0} \geq 1$. This gives the desired contradiction.

	{\bf Case 2: } $\|u\|_{1,\mathcal{H},0} < 1$.

	By Propositions \ref{proposition_modular_properties}(iii) and \ref{proposition_embeddings}(ii) we get
	\begin{align*}%\label{estimate-below1}
		\begin{split}
			\widehat{J}_\lambda(u)
			&\geq \frac{1}{q}\|u\|_{1,\mathcal{H},0}^q -\frac{\lambda}{\vartheta} C_\vartheta \|u\|_{1,\mathcal{H},0}^\vartheta
			-\frac{1}{p^*}C_{p*}\|u\|_{1,\mathcal{H},0}^{p^*}\tau(\|u\|_{1,\mathcal{H},0})\\
			&=\widehat{\beta}_\lambda (\|u\|_{1,\mathcal{H},0}),
		\end{split}
	\end{align*}
	where
	\begin{align*}
		\widehat{\beta}_\lambda(t):=\frac{1}{q}t^q-\frac{\lambda}{\vartheta}  C_\vartheta t^\vartheta-\frac{1}{p^*}C_{p^*}t^{p^*}\tau(t).
	\end{align*}
	Since $0\leq\tau\leq1$, we note that 
	\begin{align}\label{beta-1-hat}
	\widehat{\beta}_\lambda(t)\geq\beta_\lambda(t)\geq0\quad\mbox{for any }t \in[R_0(\lambda),\min\{R_1(\lambda),1\}],
	\end{align}
	where the last inequality follows by the construction of the roots $R_0(\lambda)$ and $R_1(\lambda)$ for $\beta_\lambda$. 
	
	Hence, if $\min\{R_1(\lambda),1\}=1$, then from $\widehat{J}_\lambda(u) < 0$ and \eqref{beta-1-hat} we obtain that $\|u\|_{1,\mathcal{H},0}<R_0(\lambda)$. 
	
	While, if $\min\{R_1(\lambda),1\}=R_1(\lambda)$, considering  $R_1(\lambda)<\|u\|_{1,\mathcal{H},0}<1$ and arguing similarly to \eqref{estimate-below10} we get
	\begin{align*}
	\widehat{J}_\lambda(u)\geq\widehat{\phi}_\lambda (\|u\|_{1,\mathcal{H},0})\qquad\mbox{ with }\ \ \widehat{\phi}_\lambda(t):=\frac{1}{q}t^q-\frac{\lambda}{\vartheta}  C_\vartheta t^\vartheta,
	\end{align*}
	from which we can proceed exactly as in Case 1 to get a contradiction. Considering  $R_0(\lambda)<\|u\|_{1,\mathcal{H},0}\leq R_1(\lambda)$, from $\widehat{J}_\lambda(u) < 0$ and \eqref{beta-1-hat} we get another contradiction. Hence, we obtain again $\|u\|_{1,\mathcal{H},0}<R_0(\lambda)$, completing the first part of (i). 
	
	Moreover, we observe that $\widehat{J}_\lambda(v)=J_\lambda(v)$ for any $\|v-u\|_{1,\mathcal{H},0}<R_0(\lambda)-\|u\|_{1,\mathcal{H},0}$, concluding the proof of (i).

\noindent	\textnormal{(ii)} Note that any Palais-Smale sequence for $\widehat{J}_\lambda$ is bounded since $\widehat{J}_\lambda$ is coercive. Applying Lemma \ref{Lps}  shows that we have a local Palais-Smale condition for $J_\lambda \equiv \widehat{J}_\lambda$ at any level $c<0$.
\end{proof}

Now, we are going to construct an appropriate mini-max sequence of negative critical values for the functional $\widehat{J}_\lambda$. 

\begin{lemma}\label{lemma_mini-max-critical-values}
	Let \eqref{assumptions} be satisfied and let $\lambda\in(0,\lambda_2)$, where $\lambda_2$ is chosen for the definition of $\tau$ in \eqref{def-tau}. For any $n\in\N$ there exists $\eps=\eps(\lambda,n)>0$ such that
	\begin{align*}
		\gamma \l(\widehat{J}_\lambda^{-\eps}\r) \geq n.
	\end{align*}
	where $\widehat{J}_\lambda^{-\eps}=\l\{u \in \WH\,:\, \widehat{J}_\lambda(u)\leq -\eps\r\}$.
\end{lemma}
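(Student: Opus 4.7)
The plan is to construct, for each $n \in \N$, an $(n-1)$-sphere sitting inside $\widehat{J}_\lambda^{-\eps}$ on which $\widehat{J}_\lambda$ is uniformly bounded above by $-\eps$, and then to read off the genus lower bound from Proposition \ref{proposition-genus}(i),(ii),(iv).

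First, I would fix an $n$-dimensional linear subspace $E_n \subseteq \WH$, which exists because $C^\infty_0(\Omega) \subset \WH$ is infinite-dimensional. Since all norms on the finite-dimensional $E_n$ are equivalent, there exists a constant $\alpha_n = \alpha_n(\lambda,n) > 0$ such that
\[
\|v\|_\vartheta^\vartheta \geq \alpha_n \|v\|_{1,\mathcal{H},0}^\vartheta \qquad \text{for every } v \in E_n.
\]

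Next, I would restrict attention to $v \in E_n$ with $\|v\|_{1,\mathcal{H},0} \leq \min\{R_0(\lambda), 1\}$. For such $v$, the definition of $\tau$ in \eqref{def-tau} gives $\tau(\|v\|_{1,\mathcal{H},0}) = 1$, so $\widehat{J}_\lambda(v) = J_\lambda(v)$; moreover $\|v\|_{1,\mathcal{H},0} \leq 1$ so Proposition \ref{proposition_modular_properties}(iii) yields $\varrho_{\mathcal{H}}(\nabla v)\leq \|v\|_{1,\mathcal{H},0}^p$. Combining these observations with $1/q \leq 1/p$, and discarding the nonpositive $p^*$-term, I obtain
\[
\widehat{J}_\lambda(v) \leq \frac{1}{p}\varrho_{\mathcal{H}}(\nabla v) - \frac{\lambda}{\vartheta}\|v\|_\vartheta^\vartheta \leq \frac{1}{p}\|v\|_{1,\mathcal{H},0}^p - \frac{\lambda\alpha_n}{\vartheta}\|v\|_{1,\mathcal{H},0}^\vartheta.
\]
Since $\vartheta < p$ by \eqref{assumptions}, the real function $t \mapsto t^p/p - \lambda\alpha_n t^\vartheta/\vartheta$ is strictly negative on some interval $(0, t_0)$ with $t_0 \leq \min\{R_0(\lambda),1\}$.

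I would then pick any radius $t^* \in (0, t_0)$ and set $\eps := \lambda\alpha_n (t^*)^\vartheta/\vartheta - (t^*)^p/p > 0$, so that the sphere $K_n := \{v \in E_n : \|v\|_{1,\mathcal{H},0} = t^*\}$ satisfies $\widehat{J}_\lambda(v) \leq -\eps$ for every $v \in K_n$, i.e. $K_n \subseteq \widehat{J}_\lambda^{-\eps}$. The odd homeomorphism $v \mapsto v/t^*$ identifies $K_n$ with the unit sphere of $E_n$, which in turn is homeomorphic (via the choice of any basis) to $S^{n-1}$; Proposition \ref{proposition-genus}(ii),(iv) then yield $\gamma(K_n) = n$. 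Since $\widehat{J}_\lambda^{-\eps}$ is closed and symmetric (the functional is even) and does not contain $0$, the inclusion $K_n \hookrightarrow \widehat{J}_\lambda^{-\eps}$ is an odd continuous map, and Proposition \ref{proposition-genus}(i) delivers $\gamma(\widehat{J}_\lambda^{-\eps}) \geq n$. The main delicate point is ensuring $t^* \leq R_0(\lambda)$ so that the cut-off $\tau$ is not activated on $K_n$ and $\widehat{J}_\lambda \equiv J_\lambda$ there; the equivalence of norms on the finite-dimensional $E_n$ (giving the uniform constant $\alpha_n$) together with the strict inequality $\vartheta < p$ does the rest.
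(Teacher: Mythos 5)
Your proposal is correct and takes essentially the same route as the paper: pick an $n$-dimensional subspace, use equivalence of norms there to bound the $\vartheta$-term from below, apply Proposition \ref{proposition_modular_properties}(iii) on a sphere of radius strictly below $\min\{R_0(\lambda),1\}$ to get a uniformly negative value of $\widehat{J}_\lambda$, and conclude via $\gamma(S^{n-1})=n$ and monotonicity of the genus. Your choice of $t^*<t_0$ matches the paper's constraint $r<R<\min\bigl\{R_0(\lambda),\bigl(\lambda C(n)p/\vartheta\bigr)^{\frac{1}{p-\vartheta}}\bigr\}$, so there is nothing essentially different to compare.
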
 

\begin{proof}
	Let $\lambda\in(0,\lambda_2)$ and $n\in \N$ be fixed and let $Y_n$ be an $n$-dimensional subspace of $\WH$. By Proposition \ref{proposition_embeddings}(ii) we have that $Y_n \hookrightarrow \Lp{\vartheta}$ is continuously embedded. Hence, the norms of $\WH$ and $\Lp{\vartheta}$ are equivalent on $Y_n$. In particular, there exists a positive constant $C(n)$ which depends only on $n$ such that
	\begin{align}\label{equivalent-norm}
		-C(n)\|u\|_{1,\mathcal{H},0}^{\vartheta}\geq -\|u\|_{\vartheta}^{\vartheta}\quad\text{for any }u \in Y_n.
	\end{align}
	Therefore, using \eqref{equivalent-norm} and Proposition \ref{proposition_modular_properties}(iii), for any $u\in Y_n$ with $\|u\|_{1,\mathcal{H},0}\leq R_0(\lambda)<1$ we have 
	\begin{align}\label{estimate-finitel-dimensional}
		\widehat{J}_\lambda(u)
		\leq \frac{1}{p}\|u\|_{1,\mathcal{H},0}^p-\frac{\lambda}{\vartheta}C(n)\|u\|_{1,\mathcal{H},0}^\vartheta.
	\end{align}
	Now, let $r$ and $R$ be two positive constants such that
	\begin{align}\label{choice-parameter}
		r<R<\min\l\{R_0(\lambda),\l(\frac{\lambda C(n)p}{\vartheta}\r)^{\frac{1}{p-\vartheta}}\r\}
	\end{align}
	and let
	\begin{align*}
		\mathbb{S}_n=\l\{u\in Y_n \,:\,\|u\|_{1,\mathcal{H},0}=r\r\}.
	\end{align*}
	It is clear that $\mathbb{S}_n$ is homeomorphic to the $(n-1)$-dimensional sphere $S^{n-1}$. Thus, from Proposition \ref{proposition-genus}(iv) we know that $\gamma(\mathbb{S}_n)=n$. Furthermore, from \eqref{estimate-finitel-dimensional} and \eqref{choice-parameter} we obtain
	\begin{align*}
		\widehat{J}_\lambda(u)
		\leq r^\vartheta\l(\frac{1}{p}r^{p-\vartheta}-\frac{\lambda}{\vartheta}C(n)\r)\leq R^\vartheta\l(\frac{1}{p}R^{p-\vartheta}-\frac{\lambda}{\vartheta}C(n)\r)<0.
	\end{align*}
	Hence, we find a constant $\eps>0$ such that $\widehat{J}_\lambda(u)<-\eps$ for any $u\in\mathbb{S}_n$, that is, $\mathbb{S}_n\subset \widehat{J}_\lambda^{-\eps}$ and so, by Proposition \ref{proposition-genus}(i),
	\begin{align*}
		\gamma \l(\widehat{J}_\lambda^{-\eps}\r) \geq \gamma (\mathbb{S}_n)=n.
	\end{align*}
\end{proof}

Next we define for any $n\in\N$ the sets
\begin{align*}
	\Sigma_n&=\l\{A\subset \WH\setminus\{0\}\,:\,A \text{ is closed, }A=-A \text{ and }\gamma(A)\geq n\r\},\\
	K_c&=\l\{u \in \WH\,:\,\widehat{J}\,'_\lambda(u)=0 \text{ and } \widehat{J}_\lambda(u)=c\r\}
\end{align*}
and the number
\begin{align*}%\label{critical-values}
	c_n=\inf_{A\in \Sigma_n} \sup_{u\in A} \widehat{J}_\lambda(u).
\end{align*}
Clearly, $c_n \leq c_{n+1}$ for any $n\in\mathbb N$.

\begin{lemma}\label{lemma_cn-negative}
	Let \eqref{assumptions} be satisfied and let $\lambda\in(0,\lambda_2)$, where $\lambda_2$ is chosen for the definition of $\tau$ in \eqref{def-tau}. For any $n\in\N$, the number $c_n$ is negative.
\end{lemma}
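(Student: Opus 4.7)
The plan is to leverage Lemma \ref{lemma_mini-max-critical-values} directly, using the sublevel set $\widehat{J}_\lambda^{-\eps}$ itself as an admissible element of $\Sigma_n$. Since $\widehat{J}_\lambda$ is bounded above by $-\eps$ on this set, the mini-max value $c_n$ will immediately be at most $-\eps<0$. Fix $n\in\N$ and apply Lemma \ref{lemma_mini-max-critical-values} to produce the corresponding $\eps=\eps(\lambda,n)>0$ with $\gamma(\widehat{J}_\lambda^{-\eps})\geq n$.

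Next I would verify that $\widehat{J}_\lambda^{-\eps}$ actually belongs to the class $\Sigma_n$. It is closed in $\WH$, being the preimage of the closed half-line $(-\infty,-\eps]$ under the continuous functional $\widehat{J}_\lambda$. It is symmetric with respect to the origin because $\widehat{J}_\lambda$ is even: all four of the terms defining it depend only on $|u|$, $|\nabla u|$, or on the norm $\|u\|_{1,\mathcal{H},0}$ (through the truncation $\tau$), hence $\widehat{J}_\lambda(-u)=\widehat{J}_\lambda(u)$ for every $u\in\WH$. Moreover $0\notin \widehat{J}_\lambda^{-\eps}$, since $\widehat{J}_\lambda(0)=0>-\eps$. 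Together with the genus bound from Lemma \ref{lemma_mini-max-critical-values}, this shows $\widehat{J}_\lambda^{-\eps}\in\Sigma_n$.

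Finally, taking $\widehat{J}_\lambda^{-\eps}$ as a competitor in the definition of $c_n$ gives
\begin{align*}
	c_n\leq \sup_{u\in \widehat{J}_\lambda^{-\eps}}\widehat{J}_\lambda(u)\leq -\eps<0,
\end{align*}
which is the claim. There is no genuine obstacle here; the real work was carried out in Lemma \ref{lemma_mini-max-critical-values}, and the present statement amounts to translating the lower bound on the genus of a deep sublevel set into the desired upper bound on the mini-max level $c_n$.
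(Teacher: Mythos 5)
Your argument is correct and coincides with the paper's own proof: both invoke Lemma \ref{lemma_mini-max-critical-values} to get $\eps>0$ with $\gamma(\widehat{J}_\lambda^{-\eps})\geq n$, check that $\widehat{J}_\lambda^{-\eps}\in\Sigma_n$ using evenness and continuity of $\widehat{J}_\lambda$ (and $\widehat{J}_\lambda(0)=0$), and then bound $c_n\leq-\eps<0$ by using this sublevel set as a competitor. The paper only adds the one-line remark that $c_n>-\infty$ since $\widehat{J}_\lambda$ is bounded from below, which you may wish to note for completeness.
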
 

\begin{proof}
	Let $\lambda\in(0,\lambda_2)$ and $n\in \N$ be fixed. From Lemma \ref{lemma_mini-max-critical-values} we know there exists $\eps>0$ such that $\gamma \l(\widehat{J}_\lambda^{-\eps}\r) \geq n$. Also, since $\widehat{J}_\lambda$ is even and continuous, we know that $\widehat{J}_\lambda^{-\eps} \in\Sigma_n$. From $\widehat{J}_\lambda(0)=0$ we have $0\not\in \widehat{J}_\lambda^{-\eps}$. Since $\sup_{u\in \widehat{J}_\lambda^{-\eps}} \widehat{J}_\lambda(u)\leq -\eps$ and $\widehat{J}_\lambda$ is bounded from below, we obtain
	\begin{align*}
		-\infty < c_n= \inf_{A\in \Sigma_n} \sup_{u\in A} \widehat{J}_\lambda(u)\leq \sup_{u\in \widehat{J}_\lambda^{-\eps}} \widehat{J}_\lambda(u)\leq -\eps <0.
	\end{align*}
\end{proof}

%Now, we are going to prove that the numbers $c_n$ are critical values of $\widehat{J}_\lambda$.
Next we have the following lemma. Note that in Garc\'{\i}a Azorero-Peral Alonso \cite{Garcia-Azorero-Peral-Alonso-1991} a similar lemma was proved. In fact, the proof of Lemma \ref{lemma-critical-values} works similarly. For sake of clarity we give here the proof.
\begin{lemma}\label{lemma-critical-values}
	Let \eqref{assumptions} be satisfied, let $\lambda \in (0,\lambda^*)$, where $\lambda^*>0$ is as in Lemma \ref{lemma_truncated_functional}, and let $n\in\mathbb N$. If $c=c_n=c_{n+1}=\ldots=c_{n+l}$ for some $l\in\mathbb N$, then
	\begin{align*}
		\gamma(K_c)\geq l+1.
	\end{align*}
\end{lemma}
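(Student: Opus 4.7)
The plan is to prove this by contradiction using a standard equivariant deformation argument combined with the subadditivity of Krasnosel'skii's genus. I follow the classical scheme that goes back to Clark/Rabinowitz and was adapted to truncated functionals by Garc\'{\i}a Azorero--Peral Alonso.

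First I would record the structural facts that make the argument go through. Since $c=c_n<0$ by Lemma \ref{lemma_cn-negative} and $\lambda\in(0,\lambda^*)\subset(0,\lambda_0)$, Lemma \ref{lemma_truncated_functional}(ii) gives a local \textnormal{(PS)}$_c$ condition for $\widehat{J}_\lambda$ at level $c$. This forces $K_c$ to be compact: any sequence in $K_c$ is automatically a \textnormal{(PS)}$_c$ sequence, hence admits a convergent subsequence whose limit lies in $K_c$ by continuity of $\widehat{J}_\lambda$ and $\widehat{J}\,'_\lambda$. Moreover, $K_c\subset\Sigma$, since $\widehat{J}_\lambda$ is even (so $K_c=-K_c$) and $0\notin K_c$ (because $\widehat{J}_\lambda(0)=0\ne c$).

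Assume for contradiction that $\gamma(K_c)\leq l$. By Proposition \ref{proposition-genus}(v), applied to the compact set $K_c$, there exists $\delta>0$ so that the closed symmetric $\delta$-neighborhood $U:=N_\delta(K_c)$ satisfies $U\in\Sigma$ and $\gamma(U)=\gamma(K_c)\leq l$. Next I would invoke the equivariant deformation lemma for the even $C^1$-functional $\widehat{J}_\lambda$: using the local \textnormal{(PS)}$_c$ condition, there exist $\eps>0$ (small enough that $c+\eps<0$, so the (PS) hypothesis still applies in a neighborhood of the sublevel) and an odd homeomorphism $\eta\colon\WH\to\WH$ such that
\begin{equation*}
\eta\bigl(\widehat{J}_\lambda^{\,c+\eps}\setminus U\bigr)\subset \widehat{J}_\lambda^{\,c-\eps},
\end{equation*}
where $\widehat{J}_\lambda^{\,a}=\{u\in\WH : \widehat{J}_\lambda(u)\leq a\}$. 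This is where the main technical point hides: one must verify that the standard deformation construction (via the negative pseudo-gradient flow of $\widehat{J}_\lambda$, symmetrized by averaging) goes through in the Musielak--Orlicz Sobolev setting. This is not an issue here because $\widehat{J}_\lambda\in C^1(\WH,\R)$ and $\WH$ is a reflexive (in fact uniformly convex) Banach space, so the standard construction of an odd, locally Lipschitz pseudo-gradient vector field avoiding a neighborhood of $K_c$ applies verbatim.

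Once the deformation is in place, I would close the argument via the genus inequality. By the definition of $c_{n+l}=c$, there exists $A\in\Sigma_{n+l}$ with $\sup_{u\in A}\widehat{J}_\lambda(u)<c+\eps$, i.e.\ $A\subset \widehat{J}_\lambda^{\,c+\eps}$. Set $B:=\overline{A\setminus U}$. Since $\gamma(U)\leq l$ is finite and $A\in\Sigma_{n+l}$, Proposition \ref{proposition-genus}(iii) yields
\begin{equation*}
\gamma(B)\geq \gamma(A)-\gamma(U)\geq (n+l)-l=n.
\end{equation*}
Because $\eta$ is an odd homeomorphism, $\eta(B)$ is closed, symmetric, does not contain $0$, and by Proposition \ref{proposition-genus}(ii) satisfies $\gamma(\eta(B))=\gamma(B)\geq n$, so $\eta(B)\in\Sigma_n$. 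On the other hand,
\begin{equation*}
\eta(B)\subset \eta\bigl(\widehat{J}_\lambda^{\,c+\eps}\setminus U\bigr)\subset \widehat{J}_\lambda^{\,c-\eps},
\end{equation*}
which gives $\sup_{u\in\eta(B)}\widehat{J}_\lambda(u)\leq c-\eps<c=c_n$, contradicting the definition of $c_n$ as an infimum over $\Sigma_n$. Hence $\gamma(K_c)\geq l+1$, as claimed.
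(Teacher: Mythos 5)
Your proposal is correct and follows essentially the same route as the paper: contradiction via $\gamma(K_c)\leq l$, Proposition \ref{proposition-genus}(v) for the neighborhood $N_\delta(K_c)$, the odd deformation $\eta$ with $\eps\in(0,-c)$, the genus estimates from Proposition \ref{proposition-genus}(i)--(iii), and the contradiction with the definition of $c_n$. The only cosmetic difference is that you spell out the compactness of $K_c$ and the inclusion $\eta(\overline{A\setminus N_\delta(K_c)})\subset \widehat{J}_\lambda^{\,c-\eps}$ (which, as in the paper, is best justified by continuity of $\eta$ and closedness of the sublevel set), so the argument matches the paper's proof.
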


\begin{proof}
	Let $\lambda\in(0,\lambda^*)$. Since from Lemma \ref{lemma_cn-negative} we have that $c=c_n=c_{n+1}=\ldots=c_{n+l}$ is negative, then by Lemma \ref{lemma_truncated_functional}(ii) it easily follows that $K_c$ is compact.

	Let us assume by contradiction that $\gamma(K_c)\leq l$. Then, by Proposition \ref{proposition-genus}(v) there exists $\delta>0$ such that $\gamma(N_\delta(K_c))=\gamma(K_c)\leq l$, where 
	
	\begin{align*}
		N_\delta(K_c)=\left\{u\in \WH:\,\, \dist(u,K_c)\leq\delta\right\}.
	\end{align*}
	By the deformation theorem, see for example Benci \cite[Theorem 3.4]{Benci-1982}, there exist $\eps \in (0,-c)$ and an odd homeomorphism $\eta\colon \WH\to \WH$ such that
	\begin{align}\label{critical-1}
		\eta\l(\widehat{J}_\lambda^{c+\eps}\setminus N_\delta(K_c)\r)\subset \widehat{J}_\lambda^{c-\eps}.
	\end{align}
	While, by the definition of $c=c_{n+l}$ there exists $A\in\Sigma_{n+l}$ such that $\sup_{u\in A}\widehat{J}_\lambda(u)<c+\eps$, that is $A\subset\widehat{J}_\lambda^{c+\eps}$, and so by \eqref{critical-1}
	\begin{align}\label{critical-2}
		\eta\l(A\setminus N_\delta(K_c)\r)\subset
		\eta\l(\widehat{J}_\lambda^{c+\eps}\setminus N_\delta(K_c)\r)\subset \widehat{J}_\lambda^{c-\eps}.
	\end{align}
	On the other hand, from Proposition \ref{proposition-genus}\textnormal{(i)} and \textnormal{(iii)} we have
	\begin{align*}
		\gamma(\eta(\overline{A \setminus N_\delta(K_{c})}))\geq\gamma(\overline{A \setminus N_\delta(K_{c})}) \geq \gamma(A)-\gamma(N_\delta(K_{c}))\geq n.
	\end{align*}
	Hence, we conclude that $\eta(\overline{A \setminus N_\delta(K_{c})}) \in \Sigma_n$ and so
	\begin{align*}
		\sup_{u\in \eta(\overline{A \setminus N_\delta(K_{c})})} \widehat{J}_\lambda(u) \geq c_n=c,
	\end{align*}
	which contradicts \eqref{critical-2}.
\end{proof}

\begin{proof}[Proof of Theorem \ref{main_theorem}]
	
Let $\lambda\in(0,\lambda^*)$, where $\lambda^*>0$ is as in Lemma \ref{lemma_truncated_functional}. 
By Lemma \ref{lemma_cn-negative} we have $c_n<0$. Hence, from Lemma \ref{lemma_truncated_functional}(ii) we know that the functional $\widehat{J}_\lambda$ satisfies the Palais-Smale condition at level $c_n<0$. Thus, $c_n$ is a critical value of $\widehat{J}_\lambda$ for any $n\in\N$, see for example Rabinowitz \cite{Rabinowitz-1986}.

We consider two situations.
If $-\infty<c_1<c_2<\ldots<c_n<c_{n+1}<\ldots$, then $\widehat{J}_\lambda$ admits infinitely many critical values. 
If there exist $n$, $l\in\mathbb N$ such that $c_n=c_{n+1}=\ldots=c_{n+l}=c$, then $\gamma(K_c)\geq l+1\geq2$ by Lemma \ref{lemma-critical-values}. Thus, the set $K_c$ has infinitely many points, see Rabinowitz \cite[Remark 7.3]{Rabinowitz-1986}, which are infinitely many critical values for $\widehat{J}_\lambda$ by Lemma \ref{lemma_truncated_functional}(ii).

Then, by Lemma \ref{lemma_truncated_functional}(i) we get infinitely many negative critical values for $J_\lambda=\widehat{J}_\lambda$ and so problem \eqref{problem} has infinitely many weak solutions.
\end{proof}

\section*{Acknowledgments}
C.\,Farkas was supported by the National Research, Development and Innovation Fund of Hungary, financed under the K\_18 funding scheme, Project No.\,127926.

A.\,Fiscella is member of the {Gruppo Nazionale per l'Analisi Ma\-tema\-tica, la Probabilit\`a e
	le loro Applicazioni} (GNAMPA) of the {Istituto Nazionale di Alta Matematica ``G. Severi"} (INdAM).
A.\,Fiscella realized the manuscript within the auspices of the INdAM-GNAMPA project titled "Equazioni alle derivate parziali: problemi e modelli" (Prot\_20191219-143223-545), of the FAPESP Project titled "Operators with non standard growth" (2019/23917-3) and of the FAPESP Thematic Project titled "Systems and partial differential equations" (2019/02512-5).

\end{document}